\DeclareSymbolFont{cyrletters}{OT2}{wncyr}{m}{n}
\DeclareMathSymbol{\Sha}{\mathalpha}{cyrletters}{"58}
\newcommand{\op}[1]{\operatorname{#1}}
\numberwithin{equation}{section}
\theoremstyle{plain}
\newtheorem{Th}{Theorem}[section]
\newtheorem{Lemma}[Th]{Lemma}
\newtheorem{Cor}[Th]{Corollary}
\newtheorem{Prop}[Th]{Proposition}
\newtheorem{Remark}[Th]{Remark}
 \theoremstyle{definition}
\newtheorem{Def}[Th]{Definition}
\newtheorem{?}[Th]{Problem}
\newtheorem{Fact}[Th]{Fact}
\newcommand{\Hom}{{\rm{Hom}}}
\newcommand{\Of}{\mathcal{O}^{\flat}_{\mathbb{C}_p}}
\newcommand*{\rom}[1]{\uppercase\expandafter{\romannumeral #1\relax}}
\newcommand{\Q}{\mathbb{Q}}
\newcommand{\Z}{\mathbb{Z}}
\newcommand{\MF}{\text{MF}_{tor}^{[2-p,1]}}
\newcommand{\F}{\mathbb{F}}
\newcommand{\Fl}{\mathcal{F}^{\lambda}}
\newcommand{\C}{\mathbb{C}}
\newcommand{\B}{\text{B}_{\rm{cris}}}
\newcommand{\D}{\text{D}_{\rm{cris}}}
\newcommand{\p}{\varphi}
\renewcommand{\t}{\text{tr}\varphi}
\newcommand{\g}{\op{Ad}^0\bar{\rho}}
\newcommand{\RepQp}{\text{Rep}_{K}^f(\text{G}_{\Q_p})}
\newcommand{\RepZp}{\text{Rep}_{\op{W}(\F)}^f(\text{G}_{\Q_p})}
\DeclareSymbolFont{cyrletters}{OT2}{wncyr}{m}{n}
\DeclareMathOperator{\Gal}{Gal}
\newcommand\mtx[4] { \left( {\begin{array}{cc}
   #1 & #2 \\
   #3 & #4 \\
  \end{array} } \right)}
\begin{document}

\title{A Refined Lifting Theorem for Supersingular Galois Representations}
  
\author{Anwesh Ray}
\address{Cornell University \\ Department of Mathematics \\
  Malott Hall, Ithaca, NY 14853-4201 USA} 
\email{ar2222@cornell.edu}
\maketitle
\begin{abstract} Let $p\geq 5$ be a prime number, $\F$ a finite field of characteristic $p$ and let $\bar{\chi}$ be the mod-$p$ cyclotomic character. Let $\bar{\rho}:\op{G}_{\Q}\rightarrow \op{GL}_2(\F)$ be a Galois representation such that the local representation $\bar{\rho}_{\restriction \text{G}_{\Q_p}}$ is \textit{flat} and irreducible. Further, assume that $\text{det}\bar{\rho}=\bar{\chi}$. The celebrated theorem of Khare and Wintenberger asserts that if $\bar{\rho}$ satisfies some natural conditions, there exists a normalized Hecke-eigencuspform $f=\sum_{n\geq 1} a_n q^n$ and a prime $\mathfrak{p}|p$ in its field of Fourier coefficients such that the associated $\mathfrak{p}$-adic representation ${\rho}_{f,\mathfrak{p}}$ lifts $\bar{\rho}$. In this manuscript we prove a refined version of this theorem, namely, that one may control the valuation of the $p$-th Fourier coefficient of $f$. The main result is of interest from the perspective of the $p$-adic Langlands program.
\end{abstract}
\section{Introduction}
\subsection{Statement of the Main Result}
\par Let $p\geq 5$ be a prime number and $\F$ a finite field of characteristic $p$. Denote by $v_p$ the $p$-adic valuation on the algebraic closure $\bar{\Q}_p$ normalized by $v_p(p)=1$. Let $f$ be a normalized Hecke eigencuspform. Set $\Q(f)$ to denote the number field generated by the Fourier coefficients of $f$ and set $\mathcal{O}_f\subset \Q(f)$ to be its ring of integers. Fix a prime $\mathfrak{p}|p$ of $\Q(f)$ and denote by $\iota_{\mathfrak{p}}:\Q(f)\rightarrow \Q(f)_{\mathfrak{p}}$ the corresponding inclusion. The $\mathfrak{p}$-adic Galois representation attached to $f$ is denoted by
\[\rho_{f, \mathfrak{p}}:\text{G}_{\Q}\rightarrow \text{GL}_2(\Q(f)_{\mathfrak{p}}).\] Set $\mathcal{O}_{f,\mathfrak{p}}$ to denote the $\mathfrak{p}$-adic completion of the ring of integers $\mathcal{O}_f$.
Let $\varpi$ be a uniformizer of $\mathcal{O}_{f,\mathfrak{p}}$. The representation $\rho_{f,\mathfrak{p}}$ is continuous w.r.t. the natural topologies on $\op{G}_{\Q}$ and $\op{GL}_2(\Q(f)_{\mathfrak{p}})$. The image of $\rho_{f,\mathfrak{p}}$ is compact. It follows that there is a suitable basis w.r.t. which the image of $\rho_{f,\mathfrak{p}}$ lies in $\op{GL}_2(\mathcal{O}_{f, \mathfrak{p}})$. The mod-$\varpi$ reduction of $\rho_{f, \mathfrak{p}}:\text{G}_{\Q}\rightarrow \text{GL}_2(\mathcal{O}_{f,\mathfrak{p}})$ is denoted $\bar{\rho}_{f, \mathfrak{p}}$. The semisimplification $\bar{\rho}_{f, \mathfrak{p}}^{\op{ss}}$ is independent of the choice of basis for the underlying vector space of $\rho_{f, \mathfrak{p}}$.
\par Fontaine and Mazur observed that the continuous Galois representation $\rho_{f, \mathfrak{p}}$ satisfies a number of characteristic properties, namely:
\begin{enumerate}
\item $\rho_{f, \mathfrak{p}}$ is irreducible,
    \item $\rho_{f, \mathfrak{p}}$ is unramified away from a finite set of primes,
    \item $\rho_{f, \mathfrak{p}}$ is \textit{odd}, that is, $\op{det} \rho_{f, \mathfrak{p}}(c)=-1$, where $c$ denotes the complex-conjugation involution.
    \item The local Galois representation $\rho_{\restriction \op{G}_{\Q_p}}$ is \textit{de Rham} (see for instance \cite[p. 73]{ConradBrinon}).
    
\end{enumerate}
A continuous Galois representation satisfying the above conditions is called \textit{geometric}, see \cite{FontaineMazur}. Fontaine and Mazur conjectured that a two-dimensional geometric Galois representation arises from a Hecke eigencuspform. Let $\bar{\rho}:\text{G}_{\Q}\rightarrow \text{GL}_2(\F)$ be an odd, absolutely irreducible Galois representation. Serre conjectured that $\bar{\rho}$ lifts to a characterisic zero modular Galois representation. Ramakrishna in \cite{Ram2, RaviFM} showed that if $\bar{\rho}$ is subject to some favorable conditions, then it lifts to a geometric Galois representation. Subsequently, Khare and Wintenberger \cite{KW} proved Serre's conjecture. In particular, they proved the strong form, which asserts that the modular form in question can be arranged to have level equal to the prime to $p$ part of the Artin conductor of $\bar{\rho}$. This part of the argument follows from Ribet's level lowering theorem, see \cite{ribetLL}. Apart from a few exceptional cases, the Fontaine-Mazur conjecture has been settled, see for instance \cite{skinnerwiles, TaylorFM, KisinFM}. \par In this paper, we prove a refined version of Serre's conjecture. Set $\chi$ to denote the $p$-adic cyclotomic character and let $\bar{\chi}$ denote its mod-$p$ reduction. Throughout, $\bar{\rho}:\op{G}_{\Q}\rightarrow \op{GL}_2(\F)$ will be a representation with determinant equal to $\bar{\chi}$ and such that $\bar{\rho}_{\restriction \op{G}_{\Q_p}}$ is absolutely irreducible. Note that such Galois representations arise for instance from elliptic curves over $\Q$ with supersingular reduction at $p$. Let $E$ is an elliptic curve with good supersingular reduction at $p$, and $\rho_{E,p}$ is the associated $p$-adic Galois representation. Then the restriction $\rho_{E,p\restriction \op{G}_{\Q_p}}\mod{p^N}$ arises from a the Galois action on the generic fibre of a finite flat group-scheme over $\Z_p$ and $\rho_{E,p\restriction \op{G}_{\Q_p}}\mod{p}$ is irreducible (see \cite[Theorem 1.2]{conradflat}). Deformations that arise this way (from Galois actions coming from finite flat group schemes over $\Z_p$) are parametrized by a deformation functor called the \textit{flat deformation functor}. The representability of the flat deformation functor was worked out by Ramakrishna in \cite{Ram1}, who also provided a description for the universal deformation ring. His calculations were based on the technique of Fontaine and Laffaille \cite{FontaineLaffaille}. It turns out that flat deformations arise from certain Fontaine-Laffaille modules considered in this paper and this allows one to make explicit calculations. For a finite set of primes numbers $S$, let $\Q_S$ be the maximal algebraic extension of $\Q$ which is unramified away from $S$ and set $\op{G}_{\Q,S}:=\op{Gal}(\Q_S/\Q)$.
\begin{Th}\label{main} Let $S$ be a finite set of primes containing $p$ and $\bar{\rho}:\op{G}_{\Q,S}\rightarrow \op{GL}_2(\F)$ be a Galois representation. Assume that the following conditions are satisfied:
\begin{enumerate}
\item Assume that the image of $\bar{\rho}$ contains $\op{SL}_2(\F_p)$ and $\bar{\rho}$ is surjective if $p=5$.
\item The local representation $\bar{\rho}_{\restriction \text{G}_{\Q_p}}$ is flat and absolutely irreducible,
\item $\det \bar{\rho}=\bar{\chi}$.
\end{enumerate}
Then, for any integer $\lambda\in \Z_{\geq 1}$, there exists an eigencuspform $f=\sum_{n\geq 1} a_n q^n\in S_2(\Gamma_1(N))$ and a choice of prime $\mathfrak{p}|p$ in $\mathcal{O}_f$, such that the following are satisfied:
\begin{enumerate}
\item $N$ is coprime to $p$.
    \item The residue field $k(\mathfrak{p})$ is contained in $\F$ and the ramification index $e(\mathfrak{p}|p)$ is equal to $1$.
    \item With respect to the choice of embedding $\iota_{\mathfrak{p}}:\Q(f)\hookrightarrow \Q(f)_{\mathfrak{p}}$, the valuation $v_p(\iota_{\mathfrak{p}}(a_p))$ is equal to $\lambda$.
    \item There is a finite set of primes $X$ disjoint from $S$ such that the Galois representation $\rho_{f,\mathfrak{p}}$ is unramified at primes away from $S\cup X$.
    \item The characteristic zero representation $\rho_{f,\mathfrak{p}}$ lifts $\bar{\rho}$, as depicted
\end{enumerate}
\[\begin{tikzpicture}[node distance = 2.0cm, auto]
      \node (GSX) {$\op{G}_{\Q,S\cup X}$};
      \node (GS) [right of=GSX] {$\op{G}_{\Q,S}$};
      \node (GL2) [right of=GS]{$\op{GL}_{2}(\F).$};
      \node (GL2W) [above of= GL2]{$\op{GL}_2(\op{W}(\F))$};
      \draw[->] (GSX) to node {} (GS);
      \draw[->] (GS) to node {$\bar{\rho}$} (GL2);
      \draw[->] (GL2W) to node {} (GL2);
      \draw[dashed,->] (GSX) to node {$\rho_{f, \mathfrak{p}}$} (GL2W);
      \end{tikzpicture}\]
\end{Th}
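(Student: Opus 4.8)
The plan is to construct a global Galois representation $\rho\colon\text{G}_{\Q,S\cup X}\to\text{GL}_2(W(\F))$ lifting $\bar\rho$, unramified outside $S\cup X$ for a suitable finite set $X$ disjoint from $S$, which is crystalline at $p$ with Hodge--Tate weights $\{0,1\}$, determinant $\chi$, and --- crucially --- whose crystalline Frobenius has trace of $p$-adic valuation exactly $\lambda$; then to invoke modularity (the Fontaine--Mazur conjecture in the residually irreducible case) to obtain $\rho\cong\rho_{f,\mathfrak p}$ for a cuspidal eigenform $f$, necessarily of weight $2$ and level prime to $p$; and finally to read off $v_p(\iota_{\mathfrak p}(a_p))=\lambda$ from the identification $\iota_{\mathfrak p}(a_p)=\text{tr}\big(\varphi|_{\D(\rho_{\restriction\Gp})}\big)$. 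The representation $\rho$ will be produced by Ramakrishna's lifting method; the new ingredient is the local deformation condition $\Fl$ at $p$ that pins down the slope, which is studied via torsion Fontaine--Laffaille modules.

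\textbf{The local condition at $p$.} Working inside $\MF$, the flat deformation functor of $\bar\rho_{\restriction\Gp}$ with fixed determinant is representable; its universal object carries a canonical Frobenius whose trace $a$ lies in the maximal ideal of the deformation ring (since $\bar\rho_{\restriction\Gp}$ is irreducible, so $\text{ad}^0\bar\rho$ has no $\Gp$-invariants). Imposing the ``slope $\lambda$'' shape of the associated filtered Frobenius module --- concretely, forcing $v_p(a)=\lambda$ --- cuts out a closed subfunctor $\Fl$ with quotient ring $\Rflambda$. Using the description of the groups $\E$ attached to the relevant torsion modules one checks that $\Rflambda$ is formally smooth over $W(\F)$ of the expected relative dimension and is nonzero. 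Nonemptiness is seen concretely: the filtered $\varphi$-module over $W(\F)[1/p]$ with $\varphi$-characteristic polynomial $T^2-p^{\lambda}T+p$ and a Hodge filtration jumping at $0$ and $1$ is weakly admissible --- for $\lambda\geq 1$ the Newton slope is $\tfrac12$, so $\varphi$ has no eigenline over $W(\F)[1/p]$ and there is no $\varphi$-stable subobject --- and the associated crystalline representation is flat with determinant $\chi$; being of positive slope it reduces to the unique flat irreducible mod-$p$ representation of $\Gp$ with determinant $\bar\chi$, namely $\bar\rho_{\restriction\Gp}$ by hypotheses (2) and (3). By construction its crystalline Frobenius has trace of valuation $\lambda$.

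\textbf{Globalization and modularity.} Since $\bar\rho_{\restriction\Gp}$ is irreducible, $\bar\rho$ is absolutely irreducible; for $p$ odd, $\bar\rho_{\restriction\text{G}_{\Q(\zeta_p)}}$ has adequate image apart from a short list of exceptional residual images which are handled separately. Set up the global deformation problem with determinant $\chi$, the condition $\Fl$ at $p$, and at each $v\in S$ a fixed smooth local condition of the appropriate dimension (for instance a minimal one, after enlarging $S$ if necessary). Following Ramakrishna and Taylor, choose a finite set $X$ of primes disjoint from $S\cup\{p\}$, at each of which $\bar\rho$ admits a ramified smooth local condition, such that $X$ kills the dual Selmer group $H^1_{\mathcal L^{\perp}}(\text{G}_{\Q,S\cup X},\text{ad}^0\bar\rho(1))$; the Chebotarev argument producing such primes works given the cohomological properties of $\text{ad}^0\bar\rho$ and its Tate twist. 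The resulting global deformation ring is $W(\F)$, yielding $\rho\colon\text{G}_{\Q,S\cup X}\to\text{GL}_2(W(\F))$ lifting $\bar\rho$, unramified outside $S\cup X$, of type $\Fl$ at $p$. Now $\rho$ is odd ($\det\rho=\chi$), absolutely irreducible, unramified outside a finite set, and crystalline at $p$ with distinct Hodge--Tate weights, hence modular by the Fontaine--Mazur conjecture (Kisin, Emerton) together with the residual modularity of $\bar\rho$ supplied by Khare--Wintenberger: $\rho\cong\rho_{f,\mathfrak p}$ for a cuspidal eigenform $f$, of weight $2$ and level $N$ prime to $p$ because $\rho$ is crystalline at $p$ with Hodge--Tate weights $\{0,1\}$. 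As $\rho$ takes values in $\text{GL}_2(W(\F))$, the Fourier coefficients of $f$ lie in $W(\F)$, so $\mathcal O_{f,\mathfrak p}\subseteq W(\F)$, $k(\mathfrak p)\subseteq\F$ and $e(\mathfrak p\mid p)=1$; and since $\iota_{\mathfrak p}(a_p)=\text{tr}\big(\varphi|_{\D(\rho_{\restriction\Gp})}\big)$, the type $\Fl$ gives $v_p(\iota_{\mathfrak p}(a_p))=\lambda$. This yields all four assertions.

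\textbf{Main difficulty.} The heart of the argument is the local analysis at $p$: showing that ``flat of slope exactly $\lambda$'' is a genuine deformation condition which is representable, formally smooth over $W(\F)$ of the right relative dimension, and possesses a characteristic-zero point reducing to $\bar\rho_{\restriction\Gp}$ --- this is precisely what the study of $\MF$ and of the $\widetilde{Ext}$-groups $\E$ accomplishes, and it also governs the dimension bookkeeping in the Ramakrishna count. Once it is in place, the Chebotarev argument and the appeal to modularity are standard.
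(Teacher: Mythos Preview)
Your overall architecture matches the paper's: isolate a local condition at $p$ encoding ``slope $\lambda$'', globalize via a Ramakrishna-style lifting, and invoke Kisin's modularity theorem. The gap is in the claim that the slope-$\lambda$ condition is ``formally smooth of the expected relative dimension'' and can be fed directly into the Ramakrishna machine.

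The flat deformation ring of $\bar\rho_{\restriction\Gp}$ is $\mathcal R\simeq W(\F)[[X]]$, and the trace of crystalline Frobenius on the universal Fontaine--Laffaille module is an element $\Phi(X)\in\mathfrak m_{\mathcal R}$ whose linear coefficient is a \emph{unit} (this is exactly what the paper's tangent-space computation establishes). Hence over $\F[\epsilon]$ the condition $\t=0$ --- which is what any version of ``slope $\geq\lambda$'' or ``slope $=\lambda$'' reduces to at the tangent level --- is a nontrivial linear constraint on the one-dimensional flat tangent space, and only the trivial class survives. Any honest slope-$\lambda$ subfunctor of the flat functor therefore has $0$-dimensional tangent space, not $1$-dimensional. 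With $\dim\mathcal N_p=0$ the Greenberg--Wiles formula gives $\dim H^1_{\mathcal N^{\perp}}=\dim H^1_{\mathcal N}+1\geq 1$, so the dual Selmer group \emph{cannot} be annihilated by adjoining auxiliary primes, and the one-shot Ramakrishna argument you sketch does not run.

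The paper resolves this by working with the \emph{full} flat tangent space $\mathcal N_p$ (dimension $1$) at $p$, which restores balance, but then has to confront the fact that $\mathcal N_p$ does not preserve the slope-$\lambda$ condition $\mathcal C_p^{\lambda}$ at low levels: a twist $(\text{Id}+p^{m-1}X)\varrho_m$ of a slope-$\lambda$ lift by a nonzero flat class can move $\t$ outside $p^{\lambda}(1+\mathfrak m)$ when $m\leq\lambda+1$. The key observation is that once $m\geq\lambda+2$ the flat tangent space \emph{does} preserve $\mathcal C_p^{\lambda}$. The argument is therefore genuinely two-stage: first use the Khare--Larsen--Ramakrishna method, allowing new ramification at each step, to produce a lift $\rho_{\lambda+2}$ to $W(\F)/p^{\lambda+2}$ with ${\rho_{\lambda+2}}_{\restriction\Gp}\in\mathcal C_p^{\lambda}$; only from level $\lambda+2$ onward is the problem balanced and the classical Ramakrishna method applicable with a fixed finite set $X$ of auxiliary primes. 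The auxiliary variable $U$ and the functor $\Fl$ of pairs $(\rho,U)$ with $\t=p^{\lambda}(1+U)$ are introduced precisely to make ``slope exactly $\lambda$'' a closed, smoothly representable condition (on $\mathcal F\times\hat{\mathbb A}^1$) and to verify that $\mathcal C_p^{\lambda}$ is liftable, but they do not supply the missing tangent dimension needed for a direct global argument.
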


\par There has been some interest in computing the local reductions of modular Galois representations attached to an eigencuspform $f$ for which the slope of the $p$-th Fourier coefficient is prescribed. Such questions are of interest from the perspective of the $p$-adic Langlands program. The following is a consequence of the main theorem of \cite{berger2004construction}.
\begin{Th}(Berger-Li-Zhu)
 Let $f$ and $g$ are normalized eigencuspforms of weight $k\geq 2$ and $\mathfrak{p}|p$ be a prime in $\Q(f)\cdot \Q(g)$ such that the local representations $\rho_{f,\mathfrak{p}\restriction \op{G}_{\Q_p}}$ and $\rho_{g,\mathfrak{p}\restriction \op{G}_{\Q_p}}$ are crystalline and residually irreducible. Set $\lambda_f:=v_p(\iota_{\mathfrak{p}}(a_p(f)))$ (resp. $\lambda_g:=v_p(\iota_{\mathfrak{p}}(a_p(g)))$), where $a_p(f)$ (resp. $a_p(g)$) is the $p$-th Fourier coefficient of $f$ (resp. $g$).
 Suppose that $\lambda_f,\lambda_g> \lfloor (k-2)/(p-1)\rfloor$. Then, the residual representations $\bar{\rho}_{f,\mathfrak{p}\restriction \op{G}_{\Q_p}}$ and $\bar{\rho}_{g,\mathfrak{p}\restriction \op{G}_{\Q_p}}$ are isomorphic up to a twist by a character.
\end{Th}Thus in particular, when $\lfloor (k-2)/(p-1)\rfloor=0$, i.e., $k\leq p$, the residual representation $\bar{\rho}_{f,\mathfrak{p}\restriction \op{G}_{\Q_p}}$ is uniquely determined up to twist by a character. A more explicit description is also given for these reductions, see \cite[pp.1-2]{berger2004construction}. The results in this paper are in the opposite direction. Theorem $\ref{main}$ shows that all integral slopes are realized by eigencuspforms when $k=2$ when the residual representation (and not only the local reduction at $p$) is fixed. There has been much interest in refining the results of Berger-Li-Zhu, the study of local reductions of modular Galois representations has gained considerable momentum in \cite{BuzzardGee, BuzzardGee2,GanguliGhate,BG15, BGR, Ros}.
\par We present a comprehensive overview of the strategy of proof in section $\ref{sectionoverview}$, where we also explain how the paper is organized.

\subsection{Acknowledgements}
I am very grateful to my advisor Ravi Ramakrishna for introducing me to the fascinating subject of Galois deformations and for some helpful suggestions. I would also like to thank Brian Hwang, Aftab Pande and Stefan Patrikis for some fruitful conversations. Finally, I would like to thank the anonymous referees for their insightful suggestions which have led to considerable improvement of this manuscript.
\section{Notation}
\par In this section we summarize some basic notation in this manuscript.
\begin{itemize}
    \item The $p$-adic valuation $v_p:\bar{\Q}_p\rightarrow \Q\cup\{\infty\}$ is normalized by $v_p(p)=1$.
    \item The completion of $\bar{\Q}_p$ w.r.t the valuation $v_p$ is denoted by $\C_p$. We simply denote the extension of the valuation to $\C_p$ by $v_p$ which takes values in $\mathbb{R}\cup \{\infty\}$. The valuation ring is denoted by $\mathcal{O}_{\C_p}$.
    \item The ring of Witt vectors with residue field $\F$ is denoted by $\op{W}(\F)$ and $K:=\op{W}(\F)[1/p]$. The field $K$ is the unramified extension of $\Q_p$ with residue field $\F$.
   \item Let $\F[\epsilon]/(\epsilon^2)$ denote the ring of dual numbers over $\F$. We shall simply denote the ring by $\F[\epsilon]$ with the understanding that $\epsilon^2=0$.

\item Let $\Sigma$ be a finite set of primes containing $S$ and $M$ an $\F[\text{G}_{\Q,\Sigma}]$-module, for $i=0,1,2$, the $\Sha^i$-group is the kernel of the restriction map
\[\Sha_{\Sigma}^i(M):=\text{ker}\{H^1(\op{G}_{\Q,\Sigma},M)\rightarrow \bigoplus_{v\in \Sigma} H^1(\op{G}_{\Q_v},M)\}.\]
\end{itemize}
\section{Overview}\label{sectionoverview}
\par Let $\bar{\rho}$ be a Galois representation $\bar{\rho}:\operatorname{G}_{\Q}\rightarrow \op{GL}_2(\F)$ which satisfies the conditions of Theorem \ref{main}. It follows from the well known results in \cite{Ram2, RaviFM} that $\bar{\rho}$ lifts to a continuous Galois representation $\rho:\op{G}_{\Q}\rightarrow \op{GL}_2(\op{W}(\F))$ with $\det\rho=\chi$ which is geometric in the sense of Fontaine and Mazur. This means that $\rho$ is odd, unramified away from finitely many primes and the local representation $\rho_{\restriction \operatorname{G}_{\Q_p}}$ is de Rham. We remark that at the time Ramakrishna proved his results on lifting Galois representations, the Fontaine-Mazur conjecture had not yet been settled. One step in the construction is to analyze suitable local deformation conditions associated to the local representations $\bar{\rho}_{\restriction \op{G}_{\Q_l}}$ at each prime $l$ at which $\bar{\rho}$ is ramified. Local at $p$ conditions are comprehensively studied in \cite[pp. 124-138]{RaviFM}. In \cite{Ram1}, he analyzed the \textit{flat deformation condition} at $p$, cf. Definition $\ref{flatdef}$. Since $\bar{\rho}_{\restriction \op{G}_{\Q_p}}$ is assumed to be flat, it follows that $\rho_{\restriction \op{G}_{\Q_p}}$ can be arranged to be flat, and in particular, crystalline. The analysis of flat deformations via the theory of Fontaine-Laffaille modules aided in the calculations made in \cite{Ram1}, and this method is revisited in sections $\ref{section3}$ and $\ref{section4}$ of this paper. Since $\bar{\rho}_{\restriction \op{G}_{\Q_p}}$ is assumed to be irreducible, the Fontaine Mazur conjecture is known for geometric lifts of $\bar{\rho}$ and it follows from the main theorem in \cite{KisinFM} that $\rho$ does arise from a Hecke eigencuspform $f$ of weight $2$ on $\Gamma_1(N)$. Furthermore, $f$ is supersingular at $p$ and $N$ is prime to $p$. In this section we explain the general lifting strategy used in proving Theorem $\ref{main}$, after reviewing a few preliminary notions.
\subsection{Preliminaries}
\begin{Def} Let $\mathcal{C}_{\op{W}(\F)}$ be the category of coefficient rings over $\op{W}(\F)$ with residue field $\F$. The objects of this category consist of local $\text{W}(\F)$-algebras $(R,\mathfrak{m})$ such that
      \begin{itemize}
          \item $R$ is complete and Noetherian,
          \item $R/\mathfrak{m}$ is isomorphic to $\F$ as a $\text{W}(\F)$-algebra. The residual map \[\phi:R\rightarrow \F\]is the composite of the quotient map $R\rightarrow R/\mathfrak{m}$ with the unique isomorphism of $\op{W}(\F)$-algebras $R/\mathfrak{m}\xrightarrow{\sim}\F$.
      \end{itemize} 
      A finite length coefficient ring is given the discrete topology. An arbitrary coefficient ring is an inverse limit of finite length coefficient rings and is given the inverse limit topology. A morphism $F:(R_1,\mathfrak{m}_1)\rightarrow (R_2,\mathfrak{m}_2)$ is a continuous homorphism of local rings which is also a $\text{W}(\F)$-algebra homorphism. The subcategory of finite length coefficient rings is denoted $\mathcal{C}_{\op{W}(\F)}^f$.      \end{Def} 
      \par Recall that $\chi:\op{G}_{\Q}\rightarrow \op{GL}_1(\op{W}(\F))$ is the $p$-adic cyclotomic character. For $R\in \mathcal{C}_{\op{W}(\F)}$, let $\chi_R$ denote the character obtained by composing $\chi$ with the natural map $\op{GL}_1(\op{W}(\F))\rightarrow \op{GL}_1( R)$ induced by the structure map $\op{W}(\F)\rightarrow R$. Let $v$ be a prime number and denote by $\chi_{R,v}$ the restriction of $\chi_R$ to $\op{G}_{\Q_v}$. Denote by $\phi^*: \op{GL}_2(R)\rightarrow \op{GL}_2(\F)$ the group homomorphism induced by the residual homomorphism $\phi: R\rightarrow \F$. We say that a continuous homomorphism $\rho_R:\op{G}_{\Q_v}\rightarrow \op{GL}_2(R)$ is an $R$-lift of $\bar{\rho}_{\restriction \operatorname{G}_{\Q_v}}$ if $\phi^*\circ \rho_R=\bar{\rho}_{\restriction \operatorname{G}_{\Q_v}}$, i.e., the following diagram commutes
     \[ \begin{tikzpicture}[node distance = 2.2 cm, auto]
            \node(G) at (0,0){$\op{G}_{\Q_v}$};
             \node (A) at (3,0) {$\op{GL}_2(\F)$.};
             \node (B) at (3,2) {$\op{GL}_2(R)$};
      \draw[->] (G) to node [swap]{$\bar{\rho}_{\restriction \operatorname{G}_{\Q_v}}$} (A);
       \draw[->] (B) to node{$\phi^*$} (A);
      \draw[->] (G) to node {$\rho_R$} (B);
      \end{tikzpicture}\]Further, we require that $\det\rho_R$ is equal to $\chi_{R,v}$. Two lifts $\rho_R$ and $\rho_R'$ are said to be \textit{strictly-equivalent} if there is
      \[A\in \widehat{\op{GL}_2}(R):=\op{ker}\lbrace \op{GL}_2(R)\xrightarrow{\phi^*} \op{GL}_2(\F)\rbrace \]
      such that 
      $\rho_R=A\rho_R' A^{-1}$. A \textit{deformation} is a strict equivalence class of lifts. Let $\operatorname{Def}_v(R)$ be the set of $R$-deformations of $\bar{\rho}_{\restriction \op{G}_{\Q_v}}$ with determinant $\chi_{R,v}$. The association $R\mapsto \operatorname{Def}_v(R)$ defines a covariant functor \[\operatorname{Def}_v:\mathcal{C}_{\op{W}(\F)}\rightarrow \operatorname{Sets}. \]
      The adjoint representation $\text{Ad}^0\bar{\rho}$ is the $\F[\text{G}_{\Q}]$-module of trace zero matrices
    \[\op{Ad}^0\bar{\rho}=\left\{ \mtx{a}{b}{c}{-a}\mid a,b,c\in \F\right\}\]where $g\in \text{G}_{\Q}$ acts through conjugation via $\bar{\rho}$
    \[g\cdot X=\bar{\rho}(g)X\bar{\rho}(g)^{-1}.\]The tangent space $\operatorname{Def}_v(\F[\epsilon]/(\epsilon^2))$ naturally acquires the structure of an $\F$-vector space and is isomorphic to $H^1(\operatorname{G}_{\Q_v},\g)$. Under this association, a cohomology class $f$ is identified with the deformation $(\operatorname{Id}+\epsilon f)\bar{\rho}_{\restriction \op{G}_{\Q_v}}$. For $m\in \Z_{\geq 2}$, the set of deformations $\operatorname{Def}_v(\text{W}(\F)/p^{m})$ is equipped with action of the cohomology group $H^1(\op{G}_{\Q_v}, \g)$. For $\varrho_m\in \operatorname{Def}_v(\text{W}(\F)/p^{m})$ and $f\in H^1(\op{G}_{\Q_v}, \g)$, the twist of $\varrho_m$ by $f$ is defined by the formula $(\op{Id}+p^{m-1}f)\varrho_m$. The set of deformations $\varrho_m$ of a fixed $\varrho_{m-1}\in \operatorname{Def}_v(\text{W}(\F)/p^{m-1})$ is either empty or in bijection with $H^1(\operatorname{G}_{\Q_v},\g)$.
      \begin{Def}\label{defconditiondef} (see \cite{taylor}) A sub-functor $\mathcal{C}_v$ of $\operatorname{Def}_v$ is referred to as a \textit{deformation functor}. It is a \textit{deformation condition} if (1) to (3) below are satisfied. If condition (4) is satisfied, $\mathcal{C}_v$ is said to be a \textit{liftable deformation functor}.
      \begin{enumerate}
          \item First, we require that $\mathcal{C}_v(\F)=\{\bar{\rho}_{\restriction \op{G}_{\Q_v}}\}.$
          \item Let $R_1, R_2\in\mathcal{C}_{\op{W}(\F)}$, let $\rho_1\in \mathcal{C}_v(R_1)$ and $\rho_2\in \mathcal{C}_v(R_2)$. Let $I_1$ be an ideal in $R_1$ and $I_2$ an ideal in $R_2$ such that there is an isomorphism $\alpha:R_1/I_1\xrightarrow{\sim} R_2/I_2$ satisfying \[\alpha(\rho_1 \;\text{mod}{I_1})=\rho_2 \;\text{mod}{I_2}.\] Let $R_3$ be the fibred product \[R_3=\lbrace(r_1,r_2)\mid \alpha(r_1\text{mod} I_1)=r_2 \text{mod} I_2\rbrace\] and $\rho_3$ the $R_3$-deformation induced from $\rho_1$ and $\rho_2$. Then $\rho_3$ satisfies $\mathcal{C}_v(R_3)$.
          \item Let $R\in \mathcal{C}_{\op{W}(\F)}$ with maximal ideal $\mathfrak{m}_R$. If $\rho\in \operatorname{Def}_v(R)$ is such that $\rho\mod{\mathfrak{m}_R^n}$ satisfies $\mathcal{C}_v$ for all $n\in \Z_{\geq 1}$, then  $\rho$ also satisfies $\mathcal{C}_v$.
          \item\label{def22c4} Let $R\in \mathcal{C}_{\op{W}(\F)}$ and $I$ an ideal such that $I.\mathfrak{m}_R=0$. For $\rho\in \mathcal{C}_v(R/I)$, there exists $\tilde{\rho}\in \mathcal{C}_v(R)$ such that $\rho=\tilde{\rho}\mod{I}$.
      \end{enumerate}
      \end{Def}
      Let $\mathcal{C}_v$ be a local deformation condition at the prime $v$. The tangent space $\mathcal{N}_v$ consists of $f\in H^1(\op{G}_{\Q_v}, \g)$, such that $(\op{Id}+\epsilon f) \bar{\rho}_{\restriction \op{G}_{\Q_v}}\in \mathcal{C}_v(\F[\epsilon]/(\epsilon^2))$.
      \par 
      Let $m>1$ be an integer. In the discussion that follows, $m$ is fixed. Identify $\g$ with the kernel of the mod-$p^{m-1}$ reduction map
      \[\op{SL}_2(\op{W}(\F)/p^m)\rightarrow\op{SL}_2(\op{W}(\F)/p^{m-1}) \]by identifying $\mtx{a}{b}{c}{-a}$ in $\g$ with  \[\op{Id}+p^{m-1}\mtx{a}{b}{c}{-a}=\mtx{1+p^{m-1}a}{p^{m-1}b}{p^{m-1}c}{1-p^{m-1}a}.\] The action of $\mathcal{N}_v$ on $\operatorname{Def}_v(\text{W}(\F)/p^m)$ is induced from that of $H^1(\op{G}_{\Q_v}, \g)$. This action stabilizes $\mathcal{C}_v(\text{W}(\F)/p^m)$, i.e., if $\varrho_m\in \mathcal{C}_v(\text{W}(\F)/p^m)$ and $f\in \mathcal{N}_v$, then 
      \[(\op{Id}+p^{m-1}f)\varrho_{m}\in \mathcal{C}_v(\text{W}(\F)/p^m). \]
      Let $S$ be a finite set of primes containing $p$ and the primes at which $\bar{\rho}$ is ramified. The global deformation theoretic method of Ramakrishna requires the existence of suitable local deformation functors $\mathcal{C}_v$ at each prime $v\in S$. The following summarizes the precise condition required of the local deformation functors.
      \begin{Prop}\label{prop23}
      Let $v$ be a prime in $S\backslash \{p\}$. There is a liftable local deformation condition $\mathcal{C}_v\subseteq \op{Def}_v$ such that \[\dim  \mathcal{N}_v=\op{dim}H^0(\op{G}_{\Q_v}, \g).\] When $v=p$, there is a deformation functor $\mathcal{C}_p\subseteq \op{Def}_p$ consisting of flat deformations. This is a liftable deformation condition such that \[\dim  \mathcal{N}_p=\op{dim}H^0(\op{G}_{\Q_p}, \g)+1.\]
      \end{Prop}
      \begin{proof}
      For the case $v\neq p$, see \cite[Proposition 1, Remark p.124]{RaviFM}. For $v=p$, the functor $\mathcal{C}_p$ is the flat deformation functor with fixed determinant considered in \cite{Ram1}.
      \end{proof} 
      From here on in, for $v\in S\backslash \{p\}$, the functor $\mathcal{C}_v$ will denote the deformation condition from the above Proposition. We de not claim that the condition $\mathcal{C}_v$ is uniquely determined, but we simply work with specific choice of $\mathcal{C}_v$ as defined in \cite{RaviFM}. At $p$, the functor $\mathcal{C}_p$ will denote the flat deformation condition from \cite{Ram1}. We shall work with certain subfunctors of $\mathcal{C}_p$, yet to be defined.
\subsection{The Lifting Strategy}\label{section32}
\par Fix an integer $\lambda\in \Z_{\geq 1}$. Ramakrishna's methods provide us with a geometric lift which is flat at $p$. Theorem $\ref{main}$ is proved via an adaptation of Ramakrishna's lifting strategy, which we now explain. It is shown that there exists a suitable subfunctor $\mathcal{C}_{p}^{\lambda}$ of the functor of flat deformations $\mathcal{C}_p$ with the following properties:
\begin{enumerate}
    \item\label{cplambda1} Let $f=\sum_{n\geq 1} a_n q^n $ be a normalized Hecke eigencuspform and $\mathfrak{p}|p$ a prime of $\Q(f)$. Assume that the $\mathfrak{p}$-adic Galois representation $\rho_{f, \mathfrak{p}}$ lifts $\bar{\rho}$. Proposition $\ref{comparisonprop}$ shows that if $\rho_{f, \mathfrak{p}}$ satisfies $\mathcal{C}_p^{\lambda}$ when retricted to $\op{G}_{\Q_p}$, then the slope of $f$ is equal to $\lambda$. In other words, we have that \[v_p(\iota_{\mathfrak{p}}(a_p))=\lambda\] if $\rho_{f, \mathfrak{p}}$ satisfies $\mathcal{C}_p^{\lambda}$.
    \item\label{cplambda2} The deformation functor $\mathcal{C}_p^{\lambda}$ satisfies condition $\eqref{def22c4}$ of Definition $\ref{defconditiondef}$, i.e., it is a liftable deformation functor. Proposition $\ref{liftableprop}$ shows that this condition is satisfied. Note that we do not claim that it is a deformation condition, in the sense of Definition $\ref{defconditiondef}$.
    \item\label{cplambda3} Let $\mathcal{N}_p$ be the tangent space to the flat deformation functor $\mathcal{C}_p$. We require that $\mathcal{N}_p$ stabilizes mod $p^m$ lifts of $\bar{\rho}_{\restriction \op{G}_{\Q_p}}$ in $\mathcal{C}_p^{\lambda}$ for $m\geq  \lambda+2$. In greater detail, this condition requires that if $m\geq  \lambda+2$, $X\in \mathcal{N}_p$ and $\varrho_m\in \mathcal{C}_p^{\lambda}(\op{W}(\F)/p^m)$, then the twist $(\text{Id}+p^{m-1}X)\varrho_m$ also satisfies $\mathcal{C}_p^{\lambda}(\op{W}(\F)/p^m)$. Proposition $\ref{localatp}$ shows this property is satisfied.
\end{enumerate}Recall that for $v\in S\backslash \{p\}$, the functor $\mathcal{C}_v$ is specified as in Proposition $\ref{prop23}$. The main theorem is proved by lifting $\bar{\rho}$ to a geometric representation $\rho$ by successively lifting $\rho_m$ to $\rho_{m+1}$ as depicted
\[\begin{tikzpicture}[node distance = 2.0 cm, auto]
      \node (GSX) at (0,0){$\operatorname{G}_{\Q,S\cup X}$};
      \node (GL2) at (5,0){$\op{GL}_2(\F),$};
      \node (GL2Wn) at (3,2)[above of= GL2]{$\op{GL}_2(\op{W}(\F)/p^{m})$};
      \node (GL2Wnplus1) at (5,4){$\op{GL}_2(\op{W}(\F)/p^{m+1})$};
      \draw[->] (GSX) to node [swap]{$\bar{\rho}$} (GL2);
      \draw[->] (GL2Wn) to node {} (GL2);
      \draw[->] (GSX) to node [swap]{$\rho_m$} (GL2Wn);
      \draw[->] (GL2Wnplus1) to node {} (GL2Wn);
      \draw[dashed,->] (GSX) to node {$\rho_{m+1}$} (GL2Wnplus1);
      \end{tikzpicture}\] 
      so that $\rho_{m\restriction \op{G}_{\Q_p}}$ satisfies $\mathcal{C}_p^{\lambda}$, the restriction $\rho_{m\restriction \op{G}_{\Q_v}}$ satisfies $\mathcal{C}_v$ at each prime $v\in S\backslash \{p\}$ and the auxiliary set of primes $X$ is finite. Furthermore, each auxiliary prime $v\in X$ is equipped with a liftable subfunctor $\mathcal{C}_v$ of $\operatorname{Def}_v$ whose tangent space has dimension $\dim \mathcal{N}_v=\op{dim}H^0(\op{G}_{\Q_v}, \g)$. These auxiliary primes are dubbed \textit{nice primes} by Ramakrishna and the deformations at these primes are dubbed \textit{nice deformations}. They were introduced in the two-dimensional setting in \cite[section 3]{Ram2}. We recall the definition.
      \begin{Def}\label{nicedefs}
Let $v\notin S$ be a prime and $\sigma_v$ denote the Frobenius at $v$. Then $v$ is a nice prime if:
\begin{enumerate}
    \item $v\not\equiv \pm 1\mod{p}$,
    \item for a suitable choice of basis, $\bar{\rho}(\sigma_v)$ is given by the matrix $\mtx{vx}{}{}{x}$ such that $x^2=1$.
\end{enumerate}
Let $\op{I}_v$ be the inertia group at $v$. The deformations $\mathcal{C}_v$ consist of deformations $\varrho:\op{G}_{\Q_v}\rightarrow \op{GL}_2(R)$ such that 
\[\varrho(\sigma_v)=\mtx{v\tilde{x}}{}{}{\tilde{x}}\text{ and }\varrho(g)=\mtx{1}{\ast}{}{1},\]where $\tilde{x}$ lifts $x$. Since it is assumed that $\det \varrho=\chi_{R,v}$, it follows that $\tilde{x}^2=1$. Let $\mathcal{N}_v$ denote the tangent space of $\mathcal{C}_v$.
      \end{Def}
      Let $v$ be a nice prime and denote by $\bar{\chi}_v$ the restriction of $\bar{\chi}$ to $\op{G}_{\Q_v}$. As an $\op{G}_{\Q_v}$-module, $\g$ decomposes into a direct sum $\g\simeq \F(\bar{\chi}_v)\oplus \F \oplus \F(\bar{\chi}_v^{-1})$. Standard arguments involving the use of the local Euler characteristic formula and local Tate-duality (see the proof of \cite[Lemma 2]{KLR}) show that $H^1(\op{G}_{\Q_v}, \F(\bar{\chi}_v^{-1}))=0$. Note that the classes in $H^1(\op{G}_{\Q_v}, \F)$ are unramified. For $h\in H^1(\op{G}_{\Q_v}, \g)$, we shall take $h(\sigma_v)$ to mean the value at $\sigma_v$ of the projection of $h$ to the first factor of \[H^1(\op{G}_{\Q_v}, \F)\oplus H^1(\op{G}_{\Q_v}, \F(\bar{\chi}_v)).\]
      \begin{Lemma}\label{Nvnice}
      Let $v$ be a nice prime. The space $\mathcal{N}_v$ consists of cohomology classes $h\in H^1(\op{G}_{\Q_v}, \g)$ such that $h(\sigma_v)=0$.
      \end{Lemma}
      \begin{proof}
           Recall that $h\in \mathcal{N}_v$ if and only if $(\op{Id}+\epsilon h)\bar{\rho}\in \mathcal{C}_v$. This forces that the diagonal summand of $h$ be zero and thus $h(\sigma_v)=0$. Conversely, if $h(\sigma_v)=0$ then the projection to the first factor of $h$ to \[H^1(\op{G}_{\Q_v}, \F)\oplus H^1(\op{G}_{\Q_v}, \F(\bar{\chi}_v))\] is zero. Then it is clear that $(\op{Id}+\epsilon h)\bar{\rho}\in \mathcal{C}_v$.
      \end{proof}
\par Let $X$ be a finite set of nice primes disjoint from $S$. We shall consider deformations satisfying the local conditions $\mathcal{C}_v$ for $v\in (S\backslash\{p\})\cup X$ and $\mathcal{C}_p^{\lambda}$ at $p$. Here, for $v\in S\backslash \{p\}$, the functor $\mathcal{C}_v$ is specified as in Proposition $\ref{prop23}$ and for $v\in X$, the functor $\mathcal{C}_v$ consists of nice deformations as in Definition $\ref{nicedefs}$ for $v\in X$ and prescribed by Proposition \ref{prop23} for $v\in S\backslash\{p\}$. For $v\in (S\backslash\{p\})\cup X$, set $\mathcal{N}_v\subseteq H^1(\op{G}_{\Q_v}, \g)$ for the tangent space of $\mathcal{C}_v$. Set $\mathcal{N}_p$ to be the tangent space of the functor of flat deformations $\mathcal{C}_p$. For $v\in S\cup X$, set $\mathcal{N}_v^{\perp}\subseteq H^1(\op{G}_{\Q_v},\g^*)$ to be the orthogonal complement of $\mathcal{N}_v$ with respect to the non-degenerate Tate pairing 
\[H^1(\op{G}_{\Q_v}, \g)\times H^1(\op{G}_{\Q_v}, \g^*)\rightarrow H^2(\op{G}_{\Q_v}, \F(\bar{\chi}))\xrightarrow{\sim}\F.\] Set $\mathcal{N}_{\infty}=0$ and $\mathcal{N}_{\infty}^{\perp}=0$. The Selmer-condition $\mathcal{N}$ is the tuple $\{\mathcal{N}_v\}_{v\in S\cup X\cup \{\infty\}}$ and the dual Selmer condition $\mathcal{N}^{\perp}$ is $\{\mathcal{N}_v^{\perp}\}_{v\in S\cup X\cup\{\infty\}}$. Attached to $\mathcal{N}$ and $\mathcal{N}^{\perp}$ are the Selmer and dual-Selmer groups defined as follows:
      \[H^1_{\mathcal{N}}(\op{G}_{\Q,S\cup X}, \g):=\text{ker}\left\{ H^1(\operatorname{G}_{\Q, S\cup X}, \g)\xrightarrow{\operatorname{res}_{S\cup X}} \bigoplus_{v\in S\cup X} \frac{H^1(\op{G}_{\Q_v}, \g)}{\mathcal{N}_v}\right\}\]
      and
      \[H^1_{\mathcal{N}^{\perp}}(\op{G}_{\Q,S\cup X}, \g^*):=\text{ker}\left\{ H^1(\operatorname{G}_{\Q, S\cup X}, \g^{*})\xrightarrow{\op{res}_{S\cup X}'} \bigoplus_{v\in S\cup X} \frac{H^1(\op{G}_{\Q_v}, \g^*)}{\mathcal{N}_v^{\perp}}\right\}\]
      respectively. 
      \begin{Lemma}
      The dimensions of the Selmer group and dual Selmer group coincide, i.e.,
      \begin{equation}\label{selmerisdselmer}\op{dim}H^1_{\mathcal{N}}(\op{G}_{\Q,S\cup X}, \g)=\op{dim}H^1_{\mathcal{N}^{\perp}}(\operatorname{G}_{\Q, S\cup X}, \g^{*}).\end{equation}
      \end{Lemma}
      
      \begin{proof}The following formula is due to Wiles (see \cite[Theorem 8.7.9]{NW}):
      \begin{equation}\label{wilesformula}\begin{split}&\op{dim}H^1_{\mathcal{N}}(\op{G}_{\Q,S\cup X}, \g)-\op{dim}H^1_{\mathcal{N}^{\perp}}(\operatorname{G}_{\Q, S\cup X}, \g^{*})\\&=\op{dim}H^0(\op{G}_{\Q}, \g)-\op{dim}H^0(\op{G}_{\Q},\g^*)\\ &+\sum_{v\in S\cup X\cup \{\infty\}} \left(\dim \mathcal{N}_v-\op{dim}H^0(\op{G}_{\Q_v}, \g)\right).\\\end{split}\end{equation} Since $\det \bar{\rho}=\bar{\chi}$ is odd, one has that $\op{dim}H^0(\op{G}_{\infty}, \g)=1$. It follows from Proposition $\ref{prop23}$ that 
      \[\dim \mathcal{N}_v-\op{dim}H^0(\op{G}_{\Q_v}, \g)=\begin{cases} 0 & \text{ if }v\neq p, \infty,\\
      1 & \text{ if }v=p,\\
      -1 & \text{ if }v= \infty.\\
      \end{cases}\]
      Recall that the image of $\bar{\rho}$ contains $\op{SL}_2(\F_p)$. It is an easy exercise to check that \[H^0(\op{G}_{\Q}, \g)=0\text{, and }H^0(\op{G}_{\Q},\g^*)=0.\] Putting it all together, the result follows.
      \end{proof}
      The Selmer and dual Selmer groups fit into a five-term exact sequence called the Poitou-Tate sequence (see the proof of \cite[Lemma 1.1]{taylor}). We need only point out that the cokernel of $\op{res}_{S\cup X}$ injects into $H^1_{\mathcal{N}^{\perp}}(\op{G}_{\Q,S\cup X}, \g^*)^{\vee}$. In particular, if the Selmer group is zero, then so is the dual Selmer group, in which case the restriction map $\operatorname{res}_{S\cup X}$ is an isomorphism. All deformations $\rho_m$ discussed in this paper will have similitude character equal to $\chi\mod{p^m}$.
      \par The three main steps are as follows:
      \begin{enumerate}
          \item Corollary $\ref{liftrholambda}$ shows that there is a finite set of nice primes $Z$ disjoint from $S$ and a mod $p^{\lambda+2}$ lift $\rho_{\lambda+2}:\op{G}_{\Q,S\cup Z}\rightarrow \op{GL}_2(\op{W}(\F)/p^{\lambda+2})$ of $\bar{\rho}$. This lift $\rho_{\lambda+2}$ is furthermore arranged to satisfy the conditions $\mathcal{C}_v$ at the primes $v\in (S\backslash \{p\})\cup Z$ and $\mathcal{C}_p^{\lambda}$ at $p$.
          This strategy for producing such a lift is based on the method developed by Khare, Larsen and Ramakrishna in \cite{KLR}.
          \item We show that there is a finite set of nice primes $X$ containing $Z$ such that the Selmer group $H^1_{\mathcal{N}}(\op{G}_{\Q,S\cup X}, \g)$ is zero. It follows from \eqref{selmerisdselmer} that $H^1_{\mathcal{N}^{\perp}}(\op{G}_{\Q,S\cup X}, \g^*)$ is zero. This is a standard argument, see the proofs of \cite[Lemma 1.2]{taylor} or \cite[Proposition 5.2, Lemma 5.3]{PatEx}.
          \item The space $\mathcal{N}_p$ stabilizes the mod-$p^{\lambda+2}$ lifts in $\mathcal{C}_p^{\lambda}$. The method of Ramakrishna produces a lift $\rho:\op{G}_{\Q,S\cup X}\rightarrow \op{GL}_2(\op{W}(\F))$ which satisfies the condition $\mathcal{C}_p^{\lambda}$. The Fontaine-Mazur conjecture predicts that $\rho$ arises from a normalized Hecke eigencuspform $f=\sum_{n\geq 1} a_n q^n$. This has been proved by Kisin, cf. \cite{KisinFM}. Since the condition $\mathcal{C}_p^{\lambda}$ is satisfied at $p$, it follows that $v_p(\iota_{\mathfrak{p}}(a_p))$ is equal to $\lambda$ (cf. condition $\eqref{cplambda1}$ on p.6).
      \end{enumerate}
      \par Lifting $\rho_{\lambda+2}$ to characteristic zero involves a standard argument which goes back to the original works of Ramakrishna. We review this construction in complete detail. Since $X$ is chosen so that the dual Selmer group $H^1_{\mathcal{N}^{\perp}}(\op{G}_{\Q,S\cup X}, \g^*)$ is zero, it follows from the Poitou Tate long exact sequence that the restriction map 
      \[H^1(\operatorname{G}_{\Q, S\cup X}, \g)\xrightarrow{\operatorname{res}_{S\cup X}} \bigoplus_{v\in S\cup X} \frac{H^1(\op{G}_{\Q_v}, \g)}{\mathcal{N}_v}\]is surjective. Let $m\geq \lambda+2$ and $\rho_m:\op{G}_{\Q,S\cup X}\rightarrow \op{GL}_2(\op{W}(\F)/p^m)$ be a lift of $\rho_{\lambda+2}$ which is unramified outside $S\cup X$ and satisfies the conditions $\mathcal{C}_v$ at each prime $v\in (S\backslash \{p\})\cup X$ and the condition $\mathcal{C}_p^{\lambda}$ at $p$.
      \par We show that $\rho_m$ may be lifted to $\rho_{m+1}$ which satisfies the same conditions. One may always choose a continuous lift $\tau$ of $\rho_m$ as depicted
\[ \begin{tikzpicture}[node distance = 2.6 cm, auto]
            \node(G) at (0,0) {$\operatorname{G}_{\Q,S\cup X}$};
             \node (A) at (3,0) {$\op{GL}_{2}(\op{W}(\F)/p^{m})$};
             \node (B) at (3,2){$\op{GL}_2(\op{W}(\F)/p^{m+1})$};
      \draw[->] (G) to node [swap]{$\rho_m$} (A);
       \draw[->] (B) to node{} (A);
      \draw[->] (G) to node {$\tau$} (B);
      \end{tikzpicture}\]such that the composite $\det\tau=\chi \mod{p^{m+1}}$. Note that the continuous lift $\tau$ always exists since we do not insist that it is a homomorphism. We define a cohomological obstruction to there existing a lift $\rho_{m+1}=\tau$ which is a homomorphism such that $\det\rho_{m+1}=\chi \mod{p^{m+1}}$. Identify $\g$ with the kernel of the mod-$p^m$ reduction map \[\op{SL}_2(\op{W}(\F)/p^{m+1})\rightarrow \op{SL}_2(\op{W}(\F)/p^{m}),\]so that $X\in \g$ is identified with $\op{Id}+p^m X$.
      The obstruction class \[\mathcal{O}(\rho_{m})_{\restriction S\cup X_1}\in H^2(\op{G}_{S\cup X_1}, \g)\] is represented by the $2$-cocycle
      \[(g_1,g_2)\mapsto \tau(g_1 g_2)\tau(g_2)^{-1}\tau(g_1)^{-1}.\] Note that this class is well defined and does not depend on the choice of $\tau$. The obstruction class is zero precisely when $\rho_m$ lifts one more step (so that it is still unramified away from $S\cup X$). Notice that since $\rho_m$ satisfies liftable deformation conditions at each prime $v\in S\cup X$, it follows that $\mathcal{O}(\rho_m)\in \Sha^2_{S\cup X} (\g)$. Since the dual Selmer group $H^1_{\mathcal{N}^{\perp}}(\op{G}_{\Q,S\cup X}, \g^*)$ is zero, so is $\Sha^1_{S\cup X}(\g^*)$, and it follows from global-duality of $\Sha$-groups that $\Sha^2_{S\cup X}(\g)$ is zero. Hence $\rho_m$ lifts to $\rho_{m+1}:\op{G}_{\Q,S\cup X}\rightarrow \op{GL}_2(\op{W}(\F)/p^{m+1})$.
      \par In order to complete the inductive step, it suffices to show that a lift $\rho_{m+1}$ exists so that it satisfies $\mathcal{C}_v$ for $v\in (S\backslash \{p\})\cup X$ and $\mathcal{C}_p^{\lambda}$ at $p$. After picking a suitable global cohomology class $z\in H^1(\operatorname{G}_{\Q, S\cup X}, \g)$ and replacing $\rho_{m+1}$ by its twist $(\operatorname{Id}+p^{m}z)\rho_{m+1}$, this may be arranged, as we proceed to explain. At each prime $v\in (S\backslash\{p\})\cup X$, there is a cohomology class $z_v\in H^1(\op{G}_{\Q_v}, \g)$ such that the twist $(\operatorname{Id}+p^mz_v){\rho_{m+1}}_{\restriction \op{G}_{\Q_v}}$ satisfies $\mathcal{C}_v$ and a class $z_p\in H^1(\op{G}_{\Q_p}, \g)$ such that $(\operatorname{Id}+p^mz_p){\rho_{m+1}}_{\restriction \op{G}_{\Q_p}}$ satisfies $\mathcal{C}_p^{\lambda}$. Since we assume that $m\geq \lambda+2$, we have that $\mathcal{N}_p$ stabilizes $\mathcal{C}_p^{\lambda}$. For $v\in S\cup X$, the elements $z_v$ are defined modulo $\mathcal{N}_v$. Since $\operatorname{res}_{S\cup X}$ is surjective, the tuple
      $(z_v)\in \bigoplus_{v\in S\cup X} H^1(\op{G}_{\Q_v}, \g)/\mathcal{N}_v$ arises from a unique global cohomology class $z$ which is unramified outside $S\cup X$. After replacing $\rho_{m+1}$ by $(\op{Id}+p^m z) \rho_{m+1}$, it satisfies the conditions $\mathcal{C}_v$ at each prime $v\in (S\backslash\{p\})\cup X$ and $\mathcal{C}_p^{\lambda}$ at $p$. This completes the inductive lifting argument. Thus, there is a characteristic zero lift $\rho$ unramified away from $S\cup X$, satisfying $\mathcal{C}_p^{\lambda}$ at $p$. It follows from the main theorem of \cite{KisinFM} that $\rho$ arises from a normalized eigencuspform $f=\sum_{n\geq 1} a_n q^n\in  S_2(\Gamma_1(N))$. Since $\rho$ is crystalline, the level $N$ is prime to $p$. Let $\mathfrak{p}$ be the prime above $p$ in $\Q(f)$ so that $\rho=\rho_{f,\mathfrak{p}}$. Since $\rho$ satisfies $\mathcal{C}_p^{\lambda}$ at $p$, we have that
     \[v_p(\iota_{\mathfrak{p}}(a_p))=\lambda\] (see condition $\eqref{cplambda1}$ on p. 6).
\par In section $\eqref{section3}$, we recall some facts about Fontaine-Laffaille modules. In section $\eqref{section4}$, we define the subfunctor $\mathcal{C}_p^{\lambda}$ and show that it satisfies aforementioned properties. In section $\eqref{lastsection}$, we show that there is a deformation $\rho_{\lambda+2}$ satisfying the aforementioned properties. It is in this section that the main theorem is proved. The arguments in this section are referred to in the proof of Theorem $\ref{main}$.

\section{Recollections on the Fontaine-Laffaille Functor}\label{section3}
\par Let $\op{W}(\F)$ denote the ring of Witt-vectors with residue field $\F$ and let $K$ denote its fraction field $\op{W}(\F)[p^{-1}]$. Let $\sigma\in \Gal(K/\Q_p)$ be the Frobenius element and let $\C_p$ denote the completion of $\bar{\Q}_p$ w.r.t the valuation $v_p$. Recall the notion of a (filtered) $\varphi$-module.
\begin{Def}
A filtered $\varphi$-module $M$ is a $K$ vector space equipped with a semilinear bijective map $\varphi$ with respect to $\sigma$ and a decreasing filtration $\{F^i M\}$. For this filtration, $F^i M=M$ for $i\ll 0$ and $F^i M=0$ for $i\gg 0$.
\end{Def}
The $p$-adic Galois representations that arise from the \'etale cohomology of varieties over $\Q$ satisfy some natural conditions at $p$. From this perspective, there are two predominant conditions at $p$, the ordinary condition and the crystalline condition. The ordinary condition may be described in concrete representation theoretic terms. In this paper, we assume that $\bar{\rho}_{\restriction \op{G}_{\Q_p}}$ is irreducible, and thus we are not in the ordinary setup. The crystalline condition on the other hand is rather subtle, as these are the representations that arise from the Galois action on the generic fibre of a finite flat group scheme over $\Z_p$. The crystalline period functor associates a $\varphi$-module to a crystalline representation $\varrho:\op{G}_{\Q_p}\rightarrow \op{GL}_n(K)$. This allows one to classify crystalline Galois representations by classifying the associated $\varphi$-modules. The Fontaine-Laffaille functor on the other hand will allow us to parametrize flat deformations of $\bar{\rho}_{\restriction \op{G}_{\Q_p}}$. This approach helps reduce the study of finite flat group schemes to linear algebra in the category of Fontaine-Laffaille modules. Ramakrishna used the properties of the Fontaine-Laffaille functor to explicitly characterize flat deformation rings. In this section, we review some properties of the Fontaine-Laffaille functor and make preparations to describe a fixed slope subfunctor of the functor of flat deformations of $\bar{\rho}_{\restriction \text{G}_{\Q_p}}$. The standard references for this section are \cite{FontaineLaffaille, Ram1, patrikisphd}. For an introduction to the flat deformation functor, see \cite{conradflat}.
\par Let us recall the construction of Fontaine's crystalline period ring $\text{B}_{\rm{cris}}$ and the crystalline period functor $\text{D}_{\rm{cris}}$. Set $\Of$ to denote the characteristic-$p$ ring given by the inverse limit $\varprojlim\mathcal{O}_{\C_p}/p$ w.r.t the $p$-power maps $x\mapsto x^p$. We choose a distinguished element $\varepsilon=(\varepsilon_i)$ in $\Of$ for which $\varepsilon_0$ is a primitive $p$-th root of unity. For $x\in \Of$, denote by $[x]$ the Teichm\"uller lift of $x$ in $\op{W}(\Of)$. The continuous Galois-equivariant ring homomorphism
$\theta: \op{W}(\Of)\rightarrow \mathcal{O}_{\C_p}$, defined by $\theta\left(\sum_i [x^{(i)}] p^i\right):=\sum_i x^{(i)}_0 p^i$, is open and surjective. The kernel of $\theta$ is a principal ideal generated by a special element $\xi\in \op{W}(\Of)$ with some key properties stated in \cite[Proposition 4.4.3]{ConradBrinon}. Let $\text{A}_{\rm{cris}}$ denote the divided power envelope of $\op{W}(\Of)$ with respect to the ideal $\ker \theta$. More explicitly, it is given by $\text{A}_{\rm{cris}}=\op{W}(\Of)[\xi^m/m!]_{m\geq 1}$. The $p$-adic completion $\B^+$ is a local ring with residue field $\C_p$ and the crystalline period ring $\B$ is then defined to be the ring obtained on inverting the period $t:=\log[\varepsilon]\in \text{A}_{\rm{cris}}$. The period ring $\B:=\B^+[1/t]$ has an induced Galois stable filtration of $\Q_p$ vector spaces given by $F^i\B:=t^i \text{A}_{\rm{cris}}$.
\par The category of finite dimensional continuous $\op{G}_{\Q_p}$-representations over $K$ is denoted $\RepQp$.
\begin{Def}\label{crysdef}
 Fontaine's crystalline period $\D$ is a functor from $\RepQp$ to the category of finitely filtered $\p$-modules. For $V\in \RepQp$, the module $\D(V)$ is given by
\[\D(V):=\left(V\otimes_{\Q_p} \B\right)^{\op{G}_{\Q_p}}\]
with filtration
\[F^i \D(V):=\left(V\otimes_{\Q_p} F^i\B\right)^{\op{G}_{\Q_p}}.\] The representation $V$ is \textit{crystalline} if \[\dim_K V=\dim_K \text{D}_{\text{\rm{cris}}}(V).\]
\end{Def}
In order to study flat deformations of $\bar{\rho}_{\restriction \op{G}_{\Q_p}}$ over finite length local algebras over $\op{W}(\F)$, we are led to consider Fontaine-Laffaille modules.
\begin{Def}
A Fontaine-Laffaille module $M$ is a finitely generated $\op{W}(\F)$-module that is furnished with a decreasing, exhaustive, separated filtration of $\op{W}(\F)$-submodules $\{F^i M\}$ and for each integer $i$. Furthermore, for each $i$, there is a $\sigma$-semilinear map \[\varphi^i=\varphi_M^i:F^i M\rightarrow M.\] Furthermore, the following conditions are satisfied:
\begin{enumerate}
\item there exists $j_0\geq 0$ such that $F^{-j}M=M$ and $F^j M=0$ for all $j\geq j_0$,
    \item $\varphi^{i+1}=p \varphi^i$,
    \item $\sum_i \varphi^i(F^i M)=M$.
\end{enumerate}
Denote by $\p$ the map $\p^0:F^0M\rightarrow M$.
\end{Def} A map $f:(M,\p^i_M)\rightarrow (N, \p^i_N)$ of Fontaine-Laffaille modules is a $\op{W}(\F)$-module map such that $f(F^i M)\subseteq F^i N$ and 
$\p^i_N\circ f_{\restriction F^i M}=f\circ \p^i_M$.
\begin{Def}
Let $\text{MF}_{tor}^{f}$ denote the category of finite length Fontaine-Laffaille modules. For integers $a<b$, we let $\text{MF}_{tor}^{f,[a,b]}$ be the full subcategory of $\text{MF}_{tor}^{f}$ whose underlying modules $M$ satisfy $F^a M=M$ and $F^b M=0$.
\end{Def}
Let $\RepZp$ be the category of $\op{W}(\F)[\op{G}_{\Q_p}]$-modules that are of finite length over $\op{W}(\F)$. Fontaine and Laffaille in \cite{FontaineLaffaille} showed that $\text{MF}_{tor}^{f}$ and $\text{MF}_{tor}^{f,[a,b]}$ are abelian categories and defined a contravariant functor \[\text{U}_S:\text{MF}_{tor}^{f,[0,p]}\rightarrow \RepZp.\] Following \cite[section 4]{patrikisphd}, we shall use a covariant version $\text{T}$. The reader may also refer to the unpublished notes of Conrad \cite{conradnotes}, where the properties of $\text{T}$ are catalogued.
\begin{Def}\label{dualFM}
Let $M\in \text{MF}_{tor}^{[a,b]}$, its dual $M^*\in \text{MF}_{tor}^{[1-b,1-a]}$ is the module \[M^*=\Hom_{\op{W}(\F)}(M, K/\op{W}(\F))\] with filtration \[F^i(M^*)=\Hom_{\op{W}(\F)}(M/F^{1-i}M, K/\op{W}(\F)).\]
We proceed to describe the maps $\varphi^i_{M^*}:F^i(M^*)\rightarrow M^*$. Let $f\in F^i(M^*)$ and $m\in M$. Since $M=\sum_{j} \varphi^j_M(F^j M)$, the element $m$ may be represented as a sum $m=\sum_j \varphi^j_M(m_j)$. To prescribe $\varphi^i_{M^*}(f)(m)$ it suffices to define $\varphi^i_{M^*}(f)(\varphi_M^j(m_j))$. These are taken as follows 
\[\varphi^i_{M^*}(f)(\varphi^j_M(m_j))=\begin{cases}0 \text{ for }j>-i;\\
f(p^{-i-j}m_j) \text{ for }j\leq -i.\\
\end{cases}\]
\end{Def}
The reader may check that the maps $\varphi_{M^*}^i$ are well defined. We now describe Tate-twists.
\begin{Def}
For an object $M\in \text{MF}_{tor}^{f,[a,b]}$ and $m\in \Z$, the $m$-fold Tate twist of $M$ is the module $M(m)\in \text{MF}_{tor}^{f,[a+m,b+m]}$ whose underlying module is $M$ and $F^i(M(m))=F^{i-m}(M)$ and $\varphi^i_M=\varphi^{i-m}_{M(m)}$.  
\end{Def}
\begin{Def}
We let
\[\text{T}:\text{MF}_{tor}^{[2-p,1]}\rightarrow \RepZp\]
defined by $\text{T}(M)=\text{U}_S(M^*)$. We define
\[\text{T}_1:\text{MF}_{tor}^{[3-p,2]}\rightarrow \RepZp\]
by $\text{T}_1(M):=\text{U}_S(M^*(1))$. Note that $\text{T}(M)=\text{T}_1(M(1))$.
\end{Def}
We now collect a few facts about the functors $\text{T}$ and $\text{T}_1$. The following facts are proved in \cite[section 3.3]{FontaineLaffaille}. The reader may also refer to the proof of \cite[Fact 4.1]{patrikisphd}, where the arguments are summarized.
\begin{Fact}
The functor $\text{T}$ is full and faithful and as a consequence, so is $\text{T}_1$.
\end{Fact}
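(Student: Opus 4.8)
The plan is to deduce the full faithfulness of $\text{T}$, and then of $\text{T}_1$, from the classical full faithfulness of the Fontaine--Laffaille functor $\text{U}_S$ on the subcategory $\text{MF}_{tor}^{f,[0,p-1]}$, together with the observation that the duality $M\mapsto M^*$ and the Tate twists $M\mapsto M(m)$ are (anti-)equivalences of the relevant subcategories of $\text{MF}_{tor}^{f}$. First I would record two purely formal facts. (i) The duality functor of Definition \ref{dualFM} is an exact contravariant functor carrying $\text{MF}_{tor}^{f,[a,b]}$ to $\text{MF}_{tor}^{f,[1-b,1-a]}$, and the canonical evaluation map $M\to M^{**}$ is a natural isomorphism (Matlis duality over $W(\F)$ with dualizing module $K/W(\F)$, plus a direct check that the filtrations and the maps $\varphi^i$ are recovered); hence $M\mapsto M^*$ is an anti-equivalence, in particular fully faithful as a functor into the opposite category. (ii) For every $m\in\Z$ the Tate twist $M\mapsto M(m)$ is an equivalence $\text{MF}_{tor}^{f,[a,b]}\xrightarrow{\sim}\text{MF}_{tor}^{f,[a+m,b+m]}$ with quasi-inverse $M\mapsto M(-m)$, and there is a natural isomorphism $(M(m))^*\cong M^*(-m)$; both assertions are immediate from the definitions of the Tate twist and of $M^*$. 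These are routine verifications which I would carry out but not dwell on.

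Next, write $\text{T}=\text{U}_S\circ(-)^*$. By (i), duality carries $\text{MF}_{tor}^{f,[2-p,1]}$ into $\text{MF}_{tor}^{f,[0,p-1]}$, on which $\text{U}_S$ is fully faithful by the theorem of Fontaine and Laffaille \cite{FontaineLaffaille} (and which is contained in the range $\text{MF}_{tor}^{f,[0,p]}$ on which $\text{U}_S$ is defined). Hence, for $M,N\in \text{MF}_{tor}^{f,[2-p,1]}$, the composite
\[\Hom(M,N)\ \xrightarrow{(-)^*}\ \Hom(N^*,M^*)\ \xrightarrow{\text{U}_S}\ \Hom_{\RepZp}\big(\text{U}_S(M^*),\text{U}_S(N^*)\big)=\Hom\big(\text{T}(M),\text{T}(N)\big)\]
is, by functoriality of the two constructions, precisely the map induced by $\text{T}$; its first arrow is bijective by (i) and its second by \cite{FontaineLaffaille}, so $\text{T}$ is full and faithful.

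Finally, for $\text{T}_1$: using $(M(-1))^*\cong M^*(1)$ from (ii), for $M\in \text{MF}_{tor}^{f,[3-p,2]}$ one has $\text{T}_1(M)=\text{U}_S(M^*(1))=\text{U}_S\big((M(-1))^*\big)=\text{T}(M(-1))$, naturally in $M$; in other words $\text{T}_1$ is $\text{T}$ precomposed with the Tate-twist-by-$(-1)$ equivalence $\text{MF}_{tor}^{f,[3-p,2]}\xrightarrow{\sim}\text{MF}_{tor}^{f,[2-p,1]}$. Since a composition of an equivalence with a full and faithful functor is full and faithful, $\text{T}_1$ is full and faithful as well.

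The only genuinely non-formal ingredient is the full faithfulness of $\text{U}_S$ on $\text{MF}_{tor}^{f,[0,p-1]}$, i.e. the Fontaine--Laffaille theorem; the rest is bookkeeping with the duality and twist equivalences. Accordingly, the one point that deserves care is that after dualizing --- and, for $\text{T}_1$, after an additional Tate twist --- one lands inside $\text{MF}_{tor}^{f,[0,p-1]}$, which works precisely because the source ranges $[2-p,1]$ and $[3-p,2]$ are arranged so that their duals (resp.\ twisted duals) are $\text{MF}_{tor}^{f,[0,p-1]}$.
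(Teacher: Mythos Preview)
The paper records this statement as a \emph{Fact} without proof, treating it as a standard consequence of Fontaine--Laffaille theory; there is no argument in the paper to compare against. Your proposal supplies exactly the expected justification: full faithfulness of $\text{U}_S$ on $\text{MF}_{tor}^{f,[0,p-1]}$ from \cite{FontaineLaffaille}, transported through the duality anti-equivalence to obtain full faithfulness of $\text{T}$ on $\text{MF}_{tor}^{f,[2-p,1]}$, and then through the Tate-twist equivalence to obtain that of $\text{T}_1$. The bookkeeping is correct --- in particular, duality takes $[2-p,1]$ to $[0,p-1]$ (filtration length $p-1$, the good range), and the identity $\text{T}_1(M)=\text{T}(M(-1))$ is also used explicitly later in the paper (immediately after Proposition~\ref{freemodprop}). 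One small point: you should note that $\text{U}_S$ is \emph{contravariant} in the paper's conventions, so the second arrow in your displayed composite reverses direction as written; the net effect is that $\text{T}$ is covariant, which is what you want, but the display as it stands is correct only once this is made explicit.
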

\begin{Fact}
For each object $M$ of $\MF$,
\[\text{length}M= \text{length}\text{T}(M).\]
\end{Fact}
\par Let $\mathcal{C}_{\op{W}(\F)}$ be the category of finitely generated noetherian local $\op{W}(\F)$-algebras $R$ with maximal ideal $\mathfrak{m}$ and a prescribed mod $\mathfrak{m}$ isomorphism $R/\mathfrak{m}\xrightarrow{\sim} \F$ and refer to $\mathcal{C}_{\op{W}(\F)}$ as the category of \textit{coefficient rings} over $\op{W}(\F)$. A map $f:(R_1, \mathfrak{m}_1)\rightarrow (R_2, \mathfrak{m}_2)$ in $\mathcal{C}_{\op{W}(\F)}$ is a map of local rings compatible with reduction isomorphisms. Let $\mathcal{C}_{\op{W}(\F)}^f$ be the full subcategory of finite length algebras. 

\begin{Def}\label{flatdef}Let $R$ be a finite length coefficient ring and $\varrho:\op{G}_{\Q_p}\rightarrow \op{GL}_2(R)$ be a continuous representation. Then $\varrho$ is said to be \textit{flat} if it arises from the $\op{G}_{\Q_p}$-action of the generic fibre of a finite flat group scheme over $\Z_p$. For $R\in \mathcal{C}_{\op{W}(\F)}$, a continuous representation $\varrho:\op{G}_{\Q_p}\rightarrow \op{GL}_2(R)$ is flat if $\varrho\otimes_R R'$ is flat for all finite length quotients $R'$ of $R$. Define the functor
\[\mathcal{C}_p:\mathcal{C}_{\op{W}(\F)}\rightarrow \text{Sets}\]on $\mathcal{C}_{\op{W}(\F)}$ so that $\mathcal{C}_p(R)$ consists of all deformations $\varrho:\text{G}_{\Q_p}\rightarrow \text{GL}_2(R)$ of $\bar{\rho}_{\restriction \op{G}_{\Q_p}}$
that are flat with determinant $\det\varrho=\chi$.
\end{Def}
\begin{Remark}If $E_{/\Q_p}$ is an elliptic curve with good reduction, then for all $m\geq 1$, the $\op{G}_{\Q_p}$-representation on the $p^m$-torsion points $E[p^m]$ is flat with determinant $\chi$, cf. \cite[Theorem 1.2]{conradflat}. If $E$ has supersingular reduction, then the residual representation $E[p]$ is irreducible as a $\op{G}_{\Q_p}$-module.
\end{Remark}
The following result of Fontaine and Laffaille is the key to working with flat deformations.
\begin{Th}(Fontaine-Laffaille \cite[section 9]{FontaineLaffaille}) 
 Let $R\in\mathcal{C}_{\op{W}(\F)}^f$ and $\varrho\in\op{Def}_p(R)$. Then $\varrho\in \mathcal{C}_p(R)$ if and only if $\varrho=\text{T}_1(M)$ for some $M\in \text{MF}_{tor}^{[0,2]}$.
\end{Th}
Thus, we may explicitly work with modules $M\in \text{MF}_{tor}^{[0,2]}$. As $\text{T}_1$ is full and faithful, the $R$-action on the free $R$-module $\text{T}_1(M)$ carries over to a faithful $R$-action on $M$. In greater detail, if $r\in R$, the endomorphism induced by multiplication by $r$ on $\text{T}_1(M)$ is an $R[\text{G}_{\Q_p}]$-module endomorphism. Also since $\text{T}_1$ is full and faithful, this multiplication by $r$ endomorphism concides with an endomorphism
\[\text{T}_1^{-1}(r):M\rightarrow M\] in $\text{MF}_{tor}^{f}$, i.e.,
\begin{itemize}
\item $\text{T}_1^{-1}(r):M\rightarrow M$ is a $\op{W}(\F)$-module endomorphism
\item $\text{T}_1^{-1}(r)(F^iM)\subseteq F^iM$
\item $\p^i\circ \text{T}_1^{-1}(r)_{\restriction F^iM}=\text{T}_1^{-1}(r)\circ \p^i.$
\end{itemize}
Denote the $\op{W}(\F)$-algebra of $\p$-equivariant $\op{W}(\F)$-module homomorphisms of $M$ by $\text{End}_{\p}(M)$. As a consequence of the faithfulness of $\text{T}_1$, we obtain a ring homomorphism 
\[\text{T}_1^{-1}: R\rightarrow \text{End}_{\p}(M)\]
taking $r\in R$ to the endomorphism $\text{T}_1^{-1}(r)$. We summarize these observations by saying that $M$ is an $R$-module in the category $\text{MF}_{tor}^f$. In particular, it is an $R[\p]$-module and that the $R$-action on $M$ is uniquely determined by that on $\text{T}_1(M)$ and preserves the filtration on $M$. The following result \cite[Lemma 4.2]{patrikisphd} is stated for the functor $\text{T}$, the same argument applies verbatim to $\text{T}_1$.
\begin{Prop}\label{freemodprop}
Let $R\in \mathcal{C}_{\op{W}(\F)}^f$ and $M\in \text{MF}_{tor}^{[0,2]}$. Suppose that $\text{T}_1(M)\in \mathcal{C}_p(R)$. The $R$-module structure on $\text{T}_1(M)$ induces an $R$-module structure on $M$. Furthermore, $M$ is also a free $R$-module of rank $2$ and $\p:M\rightarrow M$ is an $R$-linear endomorphism.
\end{Prop}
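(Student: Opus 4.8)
The plan is to reduce the assertion to the analogous statement for the functor $\text{T}$ by means of a Tate twist, and then invoke Lemma 4.2 of \cite{patrikisphd}. First I would set $N:=M(-1)$. Since $M\in \text{MF}_{tor}^{[0,2]}$ the twist lies in $\text{MF}_{tor}^{[-1,1]}$, and because $p$ is odd we have $2-p\leq -1$, so $N\in \MF$, the source category of $\text{T}$. Unwinding the definitions of the dual, of the Tate twist, and of $\text{T}$ and $\text{T}_1$, one gets $N^{*}=M^{*}(1)$ and hence
\[\text{T}(N)=\text{U}_S(N^{*})=\text{U}_S(M^{*}(1))=\text{T}_1(M)\]
as $W(\F)[\Gp]$-modules; this is the compatibility recorded just before the proposition. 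In particular $\text{T}(N)=\text{T}_1(M)\in\mathcal{F}(R)$, so it is free of rank $2$ over $R$ with $R$-linear Galois action.

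Next I would check that the $R$-module structure $N$ inherits through $\text{T}$ is the same one $M$ inherits through $\text{T}_1$. The underlying $W(\F)$-module of $N=M(-1)$ is literally that of $M$. For $r\in R$, multiplication by $r$ is an endomorphism of the $W(\F)[\Gp]$-module $\text{T}(N)=\text{T}_1(M)$; by full faithfulness of $\text{T}$ (resp.\ of $\text{T}_1$, which is full and faithful by the Fact above) it corresponds to a morphism $\text{T}^{-1}(r)\colon N\to N$ (resp.\ $\text{T}_1^{-1}(r)\colon M\to M$) in $\text{MF}_{tor}^{f}$, and since the two target modules coincide these two morphisms have the same underlying $W(\F)$-linear map. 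Thus the common module underlying $M$ and $N$ carries a single $R$-action, realized through $\p$-equivariant $W(\F)$-endomorphisms.

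Then I would apply Lemma 4.2 of \cite{patrikisphd} to $N$: since $\text{T}(N)$ is free of rank $2$ over $R$ and $R$ acts on $N$ through morphisms of $\text{MF}_{tor}^{f}$, the lemma yields that $N$ is a free $R$-module of rank $2$; transporting back along the identification of underlying modules, so is $M$. (Should one wish to reprove this in place: $\text{T}$ is exact and preserves $W(\F)$-length, length-preservation being the Fact above, so $\text{T}(N/\mathfrak{m}_R N)=\text{T}(N)/\mathfrak{m}_R\text{T}(N)$ has $\F$-dimension $2$, hence so does $N/\mathfrak{m}_R N$; Nakayama produces a surjection $R^{2}\twoheadrightarrow N$, and the equality $\text{length}_{W(\F)}R^{2}=2\,\text{length}_{W(\F)}R=\text{length}_{W(\F)}\text{T}(N)=\text{length}_{W(\F)}N$ forces it to be an isomorphism.) Finally, $R$-linearity of $\p=\p^{0}$ is immediate from the previous paragraph: $\text{T}_1^{-1}(r)\circ\p=\p\circ\text{T}_1^{-1}(r)$ for every $r\in R$, so $\p$ commutes with the $R$-action and $\p\colon M\to M$ is $R$-linear.

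The step I expect to need the most care is the identification in the second paragraph — that the $R$-structure on $M$ coming from $\text{T}_1$ agrees with the one on $M(-1)$ coming from $\text{T}$, which is precisely what licenses the use of Patrikis's lemma as stated for $\text{T}$; the weight-range constraint $[2-p,1]$, equivalently the standing hypothesis that $p$ is odd, is exactly what keeps the twist $M(-1)$ within the domain of $\text{T}$.
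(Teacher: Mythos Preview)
Your proposal is correct and follows exactly the approach the paper takes: the paper's justification (given just before the proposition) is the one-line observation that the result follows from Lemma~4.2 of \cite{patrikisphd} for $\text{T}$, together with the identity $\text{T}_1(M)=\text{T}(M(-1))$. You have simply fleshed out this reduction with the necessary bookkeeping on weight ranges, the matching of the induced $R$-structures, and the $R$-linearity of $\varphi$.
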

Let $R\in \mathcal{C}_{\op{W}(\F)}^f$ and $M\in \text{MF}_{tor}^{[0,2]}$ with $\text{T}_1(M)\in \mathcal{C}_p(R)$. Note that by Proposition $\ref{freemodprop}$, the $R$-module $M$ is free of rank $2$. The $R$-linear operator $\p$ thus may be described by a $2\times 2$ matrix with entries in $R$. The trace of $\p$ is the sum of the diagonal elements of this matrix. We denote the trace by $\t_{\restriction M}$, or sometimes to ease notation when working with Galois representations we use $\t_{\restriction \text{T}_1(M)}$ interchangeably.
\begin{Fact}\label{TRequalsR} Let $R\in \mathcal{C}_{\op{W}(\F)}^f$, we treat $R$ as an object of $\text{MF}_{tor}^f$ which is concentrated in degree zero and $\varphi$ is the identity. Then $\text{T}(R)=R$ with trivial Galois action (see \cite[Lemma 4.6]{patrikisphd}).
\end{Fact}
\begin{Def}
Let $R\in \mathcal{C}_{\op{W}(\F)}^f$ and $A,B\in \text{MF}_{tor}^f$ be $R$-modules such that the $R$-module action is compatible with the $\op{W}(\F)$-action, the filtrations and $\varphi^i$-maps. The tensor product $A\otimes_R B$ has a natural filtration $F^m(A\otimes_R B)=\bigoplus_{i+j=m} F^i(A)\otimes_R F^j (B)$ and $\varphi^m_{A\otimes_R B}=\bigoplus_{i+j=m} \varphi^i_A \otimes_R \varphi^j_B$.
\end{Def}
\begin{Lemma}\label{Tensor}
Let $A\in \text{MF}_{tor}^{f,[0,2]}$ and $B\in \text{MF}_{tor}^{f,[0,1]}$ and $R\in \mathcal{C}_{\op{W}(\F)}^f$ such that $\text{T}_1(A)$ and $\text{T}(B)$ have the structure of $R[\text{G}_{\Q_p}]$-modules compatible with their $\op{W}(\F)[\text{G}_{\Q_p}]$-module structure. Then, there is an isomorphism of $R[\text{G}_{\Q_p}]$-modules $\text{T}_1(A)\otimes_{R} \text{T}(B)\xrightarrow{\sim} \text{T}_1(A\otimes_R B)$.
\end{Lemma}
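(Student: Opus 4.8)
The plan is to deduce the isomorphism from the classical compatibility of the covariant Fontaine--Laffaille functor with tensor products over $W(\F)$, and then to descend the canonical comparison isomorphism to $R$-coefficients by functoriality. First one should check that the right-hand side even makes sense. Since $B\in \text{MF}_{tor}^{f,[0,1]}$ is concentrated in the single filtration degree $0$ --- so $F^jB=B$ for $j\leq 0$ and $F^jB=0$ for $j\geq 1$ --- the tensor-product filtration of the Definition preceding the statement collapses to $F^m(A\otimes_R B)=F^mA\otimes_R B$. Hence $F^0(A\otimes_R B)=A\otimes_R B$ and $F^2(A\otimes_R B)=0$, so $A\otimes_R B$ lies in $\text{MF}_{tor}^{f,[0,2]}$, which for $p\geq 3$ is contained in the domain $\text{MF}_{tor}^{[3-p,2]}$ of $\text{T}_1$; thus $\text{T}_1(A\otimes_R B)$ is defined.

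Next I pass from $\text{T}_1$ to $\text{T}$. Using $\text{T}_1(M)=\text{T}(M(-1))$ and that the Tate twist may be absorbed into either tensor factor, so $(A\otimes_R B)(-1)=A(-1)\otimes_R B$, it suffices to produce a natural isomorphism of $W(\F)[\text{G}_{\Q_p}]$-modules $\text{T}(A(-1))\otimes_{W(\F)}\text{T}(B)\xrightarrow{\sim}\text{T}(A(-1)\otimes_{W(\F)}B)$. This is an instance of the compatibility of the covariant functor $\text{T}$ with tensor products --- the property for which the covariant adaptations $\text{T}$ and $\text{T}_1$ are introduced --- and is classical; see \cite{FontaineLaffaille} and \cite{patrikisphd}. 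The one thing to verify is that $A(-1)$, $B$ and $A(-1)\otimes_R B$ all lie in the domain $\text{MF}_{tor}^{[2-p,1]}$ of $\text{T}$: one has $A(-1)\in \text{MF}_{tor}^{f,[-1,1]}$, $B\in \text{MF}_{tor}^{f,[0,1]}$, and --- again because $B$ occupies a single filtration degree, so tensoring with it does not lengthen the filtration of $A(-1)$ --- also $A(-1)\otimes_R B\in \text{MF}_{tor}^{f,[-1,1]}$, while $[-1,1]\subseteq[2-p,1]$ since $p\geq 3$. Call the resulting isomorphism $\Psi$, viewed as $\text{T}_1(A)\otimes_{W(\F)}\text{T}(B)\xrightarrow{\sim}\text{T}_1(A\otimes_{W(\F)}B)$.

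It remains to descend $\Psi$ along $W(\F)\to R$. By the discussion preceding Proposition \ref{freemodprop}, the $R$-action on $\text{T}_1(A)$ is the one under which $r\in R$ acts as $\text{T}_1(\text{T}_1^{-1}(r))$ for a Fontaine--Laffaille morphism $\text{T}_1^{-1}(r)\in\End_{\p}(A)$, and similarly the $R$-action on $\text{T}(B)$ comes from $\text{T}^{-1}\colon R\to\End_{\p}(B)$. Applying naturality of $\Psi$ in the first variable at $\text{T}_1^{-1}(r)$ and in the second at $\text{T}^{-1}(r)$ shows that $\Psi$ intertwines the ``$r$ acting on one tensor factor'' endomorphisms on the source with the corresponding ones on the target; hence $\Psi$ carries the $W(\F)$-submodule generated by the elements $(r\cdot x)\otimes y-x\otimes(r\cdot y)$ isomorphically onto $\text{T}_1(K)$, where $K\subseteq A\otimes_{W(\F)}B$ is the kernel of the natural surjection $A\otimes_{W(\F)}B\twoheadrightarrow A\otimes_R B$ (an epimorphism in $\text{MF}_{tor}^{f}$ by the collapsed filtration formula above). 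Since $\text{T}_1$ is exact --- a standard property of the Fontaine--Laffaille functor in this weight range --- one gets $\text{T}_1(A\otimes_R B)\cong\text{T}_1(A\otimes_{W(\F)}B)/\text{T}_1(K)$, and $\Psi$ therefore induces the desired isomorphism $\text{T}_1(A)\otimes_R\text{T}(B)\xrightarrow{\sim}\text{T}_1(A\otimes_R B)$. It is automatically a morphism in $\RepZp$, hence $\text{G}_{\Q_p}$-equivariant.

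The genuinely non-formal ingredient is the classical tensor-compatibility invoked in the second paragraph, which I would treat as a black box. Of the steps above, the one needing the most care is the descent: one must genuinely use the full faithfulness of $\text{T}_1$ and $\text{T}$ to recognise multiplication by $r$ on $\text{T}_1(A)$ and on $\text{T}(B)$ as coming from honest Fontaine--Laffaille endomorphisms of $A$ and $B$, and then confirm that the canonical $W(\F)$-linear comparison isomorphism respects the two resulting $R$-actions, so that it survives the passage to $\otimes_R$. A subsidiary point is the weight bookkeeping in the second paragraph, which stays inside the Fontaine--Laffaille range only because $B$ sits in a single filtration degree.
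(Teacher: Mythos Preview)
Your proposal is correct and, in spirit, matches the paper's proof: the paper simply cites Lemma~4.3 of \cite{patrikisphd} and says nothing more, while you have unpacked that citation into the filtration bookkeeping, the reduction of $\text{T}_1$ to $\text{T}$ via a Tate twist, the classical $W(\F)$-linear tensor compatibility of $\text{T}$, and the descent to $R$-coefficients via full faithfulness. So the underlying argument is the same; you have just made explicit what the paper leaves to the reference.
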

\begin{proof} Recall that $\text{T}(M)$ is defined if $M\in \text{MF}_{tor}^{f,[2-p,1]}$. The reader is referred to \cite[Lemma 4.3]{patrikisphd} or \cite[Lemma 4.18]{booher} where it is shown that $\text{T}(C)\otimes_{R} \text{T}(D)\xrightarrow{\sim} \text{T}(C\otimes_R D)$ when $\text{T}(C), \text{T}(D)$ and $ \text{T}(C\otimes_R D)$ are defined. Note that $\text{T}_1(A)=\text{T}(A(-1))$. Setting $C=A(-1)\in \text{MF}_{tor}^{f,[-1,1]}$ and $D=B$, we have that $C\otimes_R D\in \text{MF}_{tor}^{f,[-1,1]}$. Since $p\geq 3$, it follows that $\text{T}(C\otimes_R D)$ is defined. Therefore, we have that 
\[\text{T}_1(A)\otimes_{R} \text{T}(B)\xrightarrow{\sim}\text{T}(C)\otimes_{R} \text{T}(D)\xrightarrow{\sim} \text{T}(C\otimes_R D)\xrightarrow{\sim} \text{T}_1(A\otimes_R B).\]
\end{proof}
Let $g:R_1\rightarrow R_2$ be a map of coefficient rings in $\mathcal{C}_{\op{W}(\F)}^f$ and $\text{T}_1(M_1)\in \mathcal{C}_p(R_1)$. The push-forward of $\text{T}_1(M_1)$ to $\mathcal{C}_p(R_2)$ is $g_* \text{T}_1(M_1):=\text{T}_1(M_1)\otimes_{R_1} R_2$. By Fact $\ref{TRequalsR}$, we have that $T(R_2)=R_2$. It follows from Lemma $\ref{Tensor}$ that there is an $R_2[\text{G}_{\Q_p}]$-module isomorphism $\text{T}_1(M_1)\otimes_{R_1} R_2\simeq \text{T}_1(M_1\otimes_{R_1} R_2)$ and as a result, we may identify $g_*\text{T}_1(M_1)$ with $\text{T}_1(M_2)$ where $M_2:=M_1\otimes_{R_1} R_2$.
The following relation comes as a consequence of this identification.
\begin{Prop}Let $g:R_1\rightarrow R_2$ be a map of coefficient rings in $\mathcal{C}_{\op{W}(\F)}^f$. With respect to notation introduced above, for $\varrho\in \mathcal{C}_p(R_1)$, the following relation holds
\begin{equation}\label{relation}\t_{\restriction g_*\varrho}=g\left(\t_{\restriction \varrho}\right).\end{equation}
\end{Prop}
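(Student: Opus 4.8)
The plan is to recognise the identity \eqref{relation} as the compatibility of the matrix trace with base change, after translating both sides into the language of Fontaine--Laffaille modules.

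First I would set up the matrix of $\varphi$ attached to $\varrho$. Since $R_1\in\mathcal{C}^f$ and $\varrho\in\mathcal{F}(R_1)$, write $\varrho=\text{T}_1(M_1)$ for $M_1\in\text{MF}_{tor}^{f,[0,2]}$ an $R_1$-module in $\text{MF}_{tor}^f$. By Proposition~\ref{freemodprop} the underlying $R_1$-module of $M_1$ is free of rank $2$ and $\p=\p^0_{M_1}$ is $R_1$-linear; choosing an $R_1$-basis $\{e_1,e_2\}$ produces a matrix $A=(A_{ij})\in\mathrm{Mat}_2(R_1)$ with $\p_{M_1}(e_j)=\sum_i A_{ij}e_i$, so that by definition $\t_{\restriction\varrho}=\t_{\restriction M_1}=A_{11}+A_{22}$. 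I would note in passing that this is independent of the chosen basis: a change of basis conjugates $A$, since $\p$ is $R_1$-linear on a free module, and the trace is conjugation-invariant.

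Next I would analyse the pushforward. As recalled just before the statement, there is an $R_2[\text{G}_{\Q_p}]$-module isomorphism $g_*\varrho\simeq\text{T}_1(M_2)$ with $M_2:=M_1\otimes_{R_1}R_2$, whence $\t_{\restriction g_*\varrho}=\t_{\restriction M_2}$. The Fontaine--Laffaille module $M_2$ is the base change of $M_1$ along $g$: it is free of rank $2$ over $R_2$ with basis $\{e_1\otimes 1,\,e_2\otimes 1\}$, and from the definition of the tensor product of Fontaine--Laffaille modules its Frobenius is $\p_{M_2}=\p_{M_1}\otimes\mathrm{id}_{R_2}$ (one writes $R_2=\text{T}(B)$ for $B$ concentrated in degree $0$ with $\varphi^0_B=\mathrm{id}$ and uses $\varphi^0_{M_1\otimes_{R_1}B}=\varphi^0_{M_1}\otimes\varphi^0_B$).

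The only step demanding any care --- and it is routine bookkeeping rather than a substantive obstacle --- is tracking the module structures through the balanced tensor product. Applying $\p_{M_2}$ to $e_j\otimes 1$ gives $\p_{M_1}(e_j)\otimes 1=\bigl(\sum_i A_{ij}e_i\bigr)\otimes 1$; rewriting $A_{ij}e_i\otimes 1=(e_i\otimes 1)\cdot g(A_{ij})$ inside the $R_2$-module $M_2$ shows that the matrix of $\p_{M_2}$ in the basis $\{e_i\otimes 1\}$ is $(g(A_{ij}))_{i,j}$. Hence
\[\t_{\restriction g_*\varrho}=\t_{\restriction M_2}=g(A_{11})+g(A_{22})=g(A_{11}+A_{22})=g\bigl(\t_{\restriction\varrho}\bigr),\]
which is \eqref{relation}. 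I do not anticipate any further difficulty; in particular the $\sigma$-semilinearity of $\varphi$ over $W(\F)$ never intervenes, since Proposition~\ref{freemodprop} allows one to argue entirely with the $R$-linear structure on $M$.
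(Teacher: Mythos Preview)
Your proposal is correct and follows the same route the paper intends: the paper states the relation as an immediate ``consequence of this identification'' $g_*\text{T}_1(M_1)\simeq \text{T}_1(M_1\otimes_{R_1}R_2)$ and gives no further argument, while you have spelled out the linear-algebra verification that the matrix of $\p$ base-changes entrywise under $g$. There is nothing to add.
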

\section{The Fixed-Slope Functor}\label{section4}
\par We recall that $K=\op{W}(\F)[1/p]$ with Frobenius $\sigma$ and $N>0$ coprime to $p$. Let $f\in S_2(\Gamma_1(N))$ be a normalized eigenform with nebentype character $\psi$. Let $\mathfrak{p}|p$ in the number field generated by the Fourier coefficients $\Q(f)$ of $f$. The $\mathfrak{p}$-adic Galois representation $\rho_{f,\mathfrak{p}}:\text{G}_{\Q}\rightarrow \text{GL}_2(\mathcal{O}_{f,\mathfrak{p}})$ where $\mathcal{O}_{f,\mathfrak{p}}$ is the valuation ring of $\Q(f)_{\mathfrak{p}}$ with residue field $k(\mathfrak{p})$. Assume that the following conditions are satisfied
\begin{enumerate}
    \item $\rho_{f,\mathfrak{p}}$ is supersingular, i.e., $f$ is supersingular at $\mathfrak{p}$,
     \item $\Q(f)_{\mathfrak{p}}$ embeds in $K$, or in other words, the residue field $k(\mathfrak{p})\subseteq \F$ and the ramification index of $e(\mathfrak{p}|p)=1$,
    \item $\rho_{f,\mathfrak{p}}$ lifts $\bar{\rho}$.
\end{enumerate}
In addition to these conditions, since $N$ is coprime to $p$ it follows that $\rho_{f,\mathfrak{p}}$ satisifes the flat deformation condition.
We have not made any assumptions on the slope of $\iota_{\mathfrak{p}}(a_p(f))$ except that it is positive. Let $V_f$ denote the $2$-dimensional $\mathfrak{p}$-adic $\op{G}_{\Q_p}$-representation induced by $f$. This representation is flat and in particular, crystalline and hence,
\[\dim_{K} \D(V_f)=\dim_{K} V_f=2.\]
The following result is \cite[Theorem 1.2.4 (ii)]{scholl}. 
\begin{Th} \label{Saito}
 The characteristic polynomial of $\p$ on the $2$ dimensional $K$ vector space $\D(V_f)$ is
\[ch(X)=X^2-\iota_{\mathfrak{p}}(a_p) X+\psi(p)p.\]
\end{Th}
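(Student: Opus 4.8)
The plan is to pin down the characteristic polynomial of $\p$ on $\D(V_f)$ --- a monic quadratic, since $\dim_K\D(V_f)=2$ --- by identifying its two coefficients with $-\iota_{\mathfrak p}(a_p)$ and $\psi(p)p^{k-1}$, using $p$-adic Hodge theory together with the Eichler--Shimura congruence relating Hecke operators to Frobenius. The key geometric input is that, since $(N,p)=1$, the representation $V_f$ is cut out by a Hecke projector from the middle-degree $p$-adic \'etale cohomology of a Kuga--Sato variety $W_k$ --- a $(k-2)$-fold fibre power of the universal elliptic curve over a suitable modular curve of level prime to $p$ --- and $W_k$ has good reduction at $p$. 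This is precisely what forces $V_f$ to be crystalline (as already used in the excerpt to get $\dim_K\D(V_f)=2$), and it makes available the crystalline comparison isomorphism in a form compatible with the Hecke action.

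For the determinant I would first recall that $\det\rho_{f,\mathfrak p}=\psi\chi^{k-1}$; this follows from the recipe $\det\bigl(X-\rho_{f,\mathfrak p}(\mathrm{Frob}_\ell)\bigr)=X^2-a_\ell X+\psi(\ell)\ell^{k-1}$ at the primes $\ell\nmid Np$ by Chebotarev density. Applying the tensor functor $\D$ gives $\det\D(V_f)=\D\bigl(\psi\chi^{k-1}\bigr)$, the rank-one filtered $\p$-module on which $\p$ acts by $\psi(p)p^{k-1}$; as a sanity check, $v_p\bigl(\psi(p)p^{k-1}\bigr)=k-1$ agrees with the Hodge--Tate weights $\{0,k-1\}$ of $V_f$.

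The trace is the substantive step. The crystalline comparison isomorphism of Faltings and Tsuji identifies $\D(V_f)$, as a filtered $\p$-module, with the $f$-isotypic part of the crystalline cohomology of the special fibre of a smooth proper model of $W_k$, compatibly with the Hecke operator $T_p$; under this identification the crystalline Frobenius is induced by the $p$-power Frobenius correspondence, and Poincar\'e duality identifies the transpose of that correspondence with a Verschiebung-type correspondence equal to $\psi(p)p^{k-1}\p^{-1}$. The Eichler--Shimura congruence on the special fibre --- in the form with $\mathrm{Sym}^{k-2}$-coefficients, due to Deligne and revisited by Scholl --- says $T_p$ is the sum of these two correspondences, which on $\D(V_f)$ becomes the operator identity
\[\p+\psi(p)p^{k-1}\p^{-1}=\iota_{\mathfrak p}(a_p),\]
so $\p$ is annihilated by $X^2-\iota_{\mathfrak p}(a_p)X+\psi(p)p^{k-1}$. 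To conclude that this quadratic is the \emph{characteristic} polynomial and not merely a polynomial killing $\p$, I would rule out the case that the minimal polynomial of $\p$ is linear: if $\p$ were scalar, then the single filtration jump $F^1\D(V_f)=\dots=F^{k-1}\D(V_f)$ (a line, as $k\ge2$) would be a $\p$-stable subobject with $t_H=k-1>t_N$, contradicting weak admissibility of $\D(V_f)$. Hence the minimal polynomial has degree $2$, equal to the characteristic polynomial, which is therefore $X^2-\iota_{\mathfrak p}(a_p)X+\psi(p)p^{k-1}$.

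I expect the genuine difficulty to be one of bookkeeping rather than of any isolated deep theorem: matching up the normalisations (arithmetic versus geometric Frobenius, the sign of the filtration jumps, and whether the nebentype enters as $\psi(p)$ or $\psi(p)^{-1}$) and upgrading the classical weight-two Eichler--Shimura relation to the Kuga--Sato/$\mathrm{Sym}^{k-2}$ setting. For that reason the clean route --- and the one I would take in the write-up --- is to cite Saito's local--global compatibility at $p$ from \cite{saito}: its main theorem determines the filtered $\p$-module (equivalently, the Weil--Deligne representation) attached to $\rho_{f,\mathfrak p}|_{\Gp}$ with the correct normalisation, so that in the good-reduction case one simply reads off that the characteristic polynomial of $\p$ on $\D(V_f)$ is the Hecke polynomial $X^2-\iota_{\mathfrak p}(a_p)X+\psi(p)p^{k-1}$ --- exactly the statement recorded in \cite{BuzzardGee}.
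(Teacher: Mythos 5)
Your proposal is correct and, in its concluding paragraph, converges to exactly the paper's treatment: the paper gives no proof of this theorem, merely recording that it ``is alluded to in \cite{BuzzardGee} and follows from the main theorem of \cite{saito}'', i.e.\ Saito's local--global compatibility at $p$, which is precisely the clean route your last paragraph recommends. Your detailed sketch (Kuga--Sato geometry, the crystalline comparison isomorphism compatibly with Hecke operators, the Eichler--Shimura relation with $\mathrm{Sym}^{k-2}$ coefficients, and the weak-admissibility argument ruling out scalar $\p$ so that the quadratic annihilating $\p$ really is its characteristic polynomial) is a sound reconstruction of what Saito's theorem packages, and is in fact considerably more than the paper itself offers.
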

\par In order to describe a \textit{ fixed slope} deformation problem we proceed to describe the notion of the \textit{slope} of any flat deformation of $\bar{\rho}_{\restriction\text{G}_{\Q_p}}$ to any Artinian coefficient ring $R$
\[ \begin{tikzpicture}[node distance = 2.5 cm, auto]
            \node at (0,0) (G) {$\text{G}_{\Q_p}$};
             \node (A) at (3,0){$\text{GL}_2(\F)$.};
             \node (B) at (3,2){$\text{GL}_2(R)$};
      \draw[->] (G) to node [swap]{$\bar{\rho}_{\restriction \text{G}_{\Q_p}}$} (A);
       \draw[->] (B) to node{} (A);
      \draw[->] (G) to node {$\varrho$} (B);
      \end{tikzpicture}\]
      The following result gives us control on the $p$-adic valuation of $\iota_{\mathfrak{p}}(a_p(f))$ via torsion Fontaine-Laffaille theory.
\begin{Prop}\label{comparisonprop}
Let $\rho:\op{G}_{\Q_p}\rightarrow \op{GL}_2(\op{W}(\F)/p^m)$ be a flat deformation of $\bar{\rho}_{\restriction \op{G}_{\Q_p}}$ such that there exists a cuspidal Hecke eigenform $f\in S_2(\Gamma_1(N))$ of level $N$ coprime to $p$ such that $\mathcal{O}_{f,\mathfrak{p}}=\op{W}(\F)$ and $\rho_{f\restriction \op{G}_{\Q_p}}$ lifts $\rho$
\[ \begin{tikzpicture}[node distance = 2.5 cm, auto]
            \node at (0,0) (G) {$\op{G}_{\Q_p}$};
             \node (A) at (3,0){$\op{GL}_2(\op{W}(\F)/p^m)$.};
             \node (B) at (3,2){$\op{GL}_2(\op{W}(\F))$};
      \draw[->] (G) to node [swap]{$\rho$} (A);
       \draw[->] (B) to node{} (A);
      \draw[->] (G) to node {$\rho_{f\restriction \op{G}_{\Q_p}}$} (B);
      \end{tikzpicture}\] Then $\t_{\restriction \rho}=\iota_{\mathfrak{p}}(a_p(f))\mod{p^m}$.
\end{Prop}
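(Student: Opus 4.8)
The plan is to realise both sides of the claimed congruence as traces of the Fontaine--Laffaille Frobenius $\p^0$ and then to evaluate the characteristic-zero side via Theorem~\ref{Saito}. First I would attach an integral Fontaine--Laffaille module to $\rho_{f,\mathfrak{p}}$. As displayed, $\rho_{f,\mathfrak{p}}$ is valued in $\text{GL}_2(W(\F))$, so we may take the $\Gp$-stable lattice $T_f := W(\F)^2 \subset V_f$ on which $\Gp$ acts via $\rho_{f,\mathfrak{p}\restriction \Gp}$; since $\rho_{f,\mathfrak{p}}$ reduces to $\rho$, the quotient $T_f/p^m T_f$ is, as a $W(\F)/p^m[\Gp]$-module, the representation space of $\rho_{\restriction \Gp}$. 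Because $f$ has weight $2$ and level prime to $p$, the restriction $V_{f\restriction\Gp}$ is crystalline with Hodge--Tate weights in $\{0,1\}$, hence lies in the Fontaine--Laffaille range; applying the covariant Fontaine--Laffaille equivalence to the lattice $T_f$ produces a $W(\F)$-module $\mathcal{M}_f$, free of rank $2$, with filtration concentrated in degrees $[0,2]$ and $\p$-structure, such that $\text{T}_1(\mathcal{M}_f) \cong T_f$. Since this equivalence commutes with the base change $W(\F) \to W(\F)/p^m$, the reduction $M := \mathcal{M}_f \otimes_{W(\F)} W(\F)/p^m$ is precisely the object of $\text{MF}_{tor}^{[0,2]}$ attached to $\rho_{\restriction\Gp}$ by the discussion around Proposition~\ref{freemodprop}, and $\p^0_M$ is the reduction of $\p^0_{\mathcal{M}_f}$.

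Granting this, the computation is short. By Proposition~\ref{freemodprop}, $M$ is free of rank $2$ over $W(\F)/p^m$ and $\p = \p^0$ is $W(\F)/p^m$-linear for this structure, and by construction $\p^0_M$ is the reduction of $\p^0_{\mathcal{M}_f}$ (with its analogous free rank-$2$ structure over $W(\F)$), so
\[ \t_{\restriction \rho} \;=\; \operatorname{tr}\!\big(\p^0_M\big) \;=\; \operatorname{tr}\!\big(\p^0_{\mathcal{M}_f}\big) \bmod p^m . \]
On the other hand, inverting $p$ identifies $\mathcal{M}_f[1/p]$ with the filtered $\p$-module $\D(V_f)$ of Theorem~\ref{Saito} in a way carrying $\p^0_{\mathcal{M}_f}$ to the crystalline Frobenius $\p$ on $\D(V_f)$; indeed, in the range $[0,2]$ one has $F^0 \D(V_f) = \D(V_f)$, so $\p^0$ is literally the crystalline Frobenius restricted to the lattice $\mathcal{M}_f$. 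Hence $\operatorname{tr}(\p^0_{\mathcal{M}_f})$ is the trace of $\p$ on $\D(V_f)$, which by Theorem~\ref{Saito} with $k=2$ is the sum of the roots of $ch(X) = X^2 - \iota_{\mathfrak{p}}(a_p)X + \psi(p)p$, namely $\iota_{\mathfrak{p}}(a_p)$. Combining with the display gives $\t_{\restriction\rho} = \iota_{\mathfrak{p}}(a_p(f)) \bmod p^m$.

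I expect the main difficulty to be normalisation bookkeeping rather than anything deep. Because $\text{T}_1$ is built from $\text{U}_S$, the duality $M \mapsto M^*$ of Definition~\ref{dualFM}, and a Tate twist, one must check that the module singled out by $\text{T}_1(\mathcal{M}_f) \cong T_f$ is the one whose $\p^0$-trace is $\iota_{\mathfrak{p}}(a_p)$, and not that of a dual or a twist (which would instead produce $\iota_{\mathfrak{p}}(a_p)/(\psi(p)p)$ or a $p$-scaled quantity). Concretely I would verify that the covariant functor $\text{T}_1$ satisfies $\D\big(\text{T}_1(N)[1/p]\big) \cong N[1/p]$ compatibly with the Frobenius and the filtration, so that Saito's $\D(V_f)$ and the rational Fontaine--Laffaille module of $\rho_{f,\mathfrak{p}\restriction\Gp}$ coincide rather than being dual to one another; this uses full faithfulness of $\text{T}_1$, the length identity recorded after Lemma~\ref{Tensor}, and the tensor compatibility of Lemma~\ref{Tensor}. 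The remaining ingredients — that $T_f$ is $W(\F)$-free and $\Gp$-stable with the stated reduction, and that Fontaine--Laffaille modules over $W(\F)$ reduce modulo $p^m$ with the expected $\p^0$ and free rank-$2$ module structure — are routine once this is pinned down.
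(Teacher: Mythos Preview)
Your proposal is correct and follows essentially the same strategy as the paper: form an integral Fontaine--Laffaille module $\mathcal{M}$ attached to the lattice in $V_f$ (the paper writes $\mathcal{M}=\varprojlim_n \text{T}_1^{-1}(L/p^nL)$), reduce modulo $p^m$ to compute $\t_{\restriction\rho}$, and then identify $\mathcal{M}[1/p]\simeq\D(V_f)$ $\varphi$-equivariantly so that Theorem~\ref{Saito} gives the trace as $\iota_{\mathfrak p}(a_p)$. The one place where the paper is more explicit than your sketch is the ``normalisation bookkeeping'' you flag: rather than deducing the compatibility $\D(\text{T}_1(N)[1/p])\cong N[1/p]$ from abstract properties of $\text{T}_1$, the paper invokes an integral $p$-adic Hodge comparison (Theorem~2.11 of \cite{hattori2018integral}, via $\hat{\text T}^*_{cris}(\mathcal M)=\Hom_{MF}(\mathcal M,\text A_{cris})$) and then checks directly, by unwinding $\text{T}_1=\text{U}_S((-)^*)$ and passing to limits, that $L\otimes K\simeq \Hom_{W(\F)}(\hat{\text T}^*_{cris}(\mathcal M),W(\F)(1))\otimes K$, which yields the desired $\varphi$-equivariant isomorphism $\D(V_f)\simeq\mathcal M\otimes K$.
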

\begin{proof}
Let $V_f=V_{f,\mathfrak{p}}$ be the underlying vector space on which $\text{G}_{\Q_p}$ acts via $\rho_{f,\mathfrak{p}}$. There is a Galois stable $\op{W}(\F)$-lattice $L$ in $V_f$ on which the Galois group acts via the integral representation \[\rho_{f,\mathfrak{p}}:\op{G}_{\Q}\rightarrow \op{GL}_2(\op{W}(\F)).\] The module $\mathcal{M}:=\varprojlim_n \text{T}_1^{-1}(L/p^n L)$ is equipped with a $\varphi$-action which is by definition the inverse limit of the $\varphi$-actions on $\text{T}_1^{-1}(L/p^n L)$ for $n\geq 1$. It follows from Proposition $\ref{freemodprop}$ that $\mathcal{M}$ is a free module of rank $2$ over $\op{W}(\F)$. Let $\text{MF}=\text{MF}_{\op{W}(\F)}$ be the category of Fontaine-Laffaille modules over $\op{W}(\F)$. Let $\text{A}_{\rm{cris},\infty}$ be the direct limit $\text{A}_{\rm{cris},\infty}:= \varinjlim_n \text{A}_{\rm{cris}}/p^n \text{A}_{\rm{cris}}$. The $\op{W}(\F)$-modules $\text{A}_{\rm{cris}}$ and $\text{A}_{\rm{cris},\infty}$ are Fontaine-Laffaille modules, see \cite[p. 9]{hattori2018integral}. Following \textit{loc. cit.} we denote by \[\begin{split}&\text{T}^*_{\rm{cris}}(\mathcal{M}):=\Hom_{\text{MF}}(\mathcal{M},\text{A}_{\rm{cris},\infty}),\\
&\hat{\text{T}}^*_{\rm{cris}}(\mathcal{M}):=\Hom_{\text{MF}}(\mathcal{M},\text{A}_{\rm{cris}}).
\end{split}\] It will be shown that there is a $\varphi$-equivariant isomorphism of $K$-vector spaces \begin{equation}\label{hardtoprove}\text{D}_{\rm{cris}}(V_f)\simeq \mathcal{M}\otimes_{\op{W}(\F)} K.\end{equation} Let us explain how the result follows once the above isomorphism is established. Note that the action of $\op{G}_{\Q_p}$ on $L/p^m L$ is via $\rho$. Hence, by definition, $\t_{\restriction \rho}$ is the trace of $\varphi$ on $\op{T}_1^{-1}(L/p^mL)$. On the other hand, it follows from \eqref{hardtoprove} and Theorem $\ref{Saito}$, that the trace of $\varphi$ acting on $\mathcal{M}$ is equal to $\iota_{\mathfrak{p}}(a_p(f))$. Since $\op{T}_1^{-1}(L/p^mL)=\mathcal{M}/p^m$, it follows that \[\t_{\restriction \rho}=\iota_{\mathfrak{p}}(a_p(f))\mod{p^m}.\]
\par We now prove $\eqref{hardtoprove}$. Let $V$ denote the two dimensional $K$-vector space \[V:=\hat{\text{T}}_{\rm{cris}}^*(\mathcal{M})\otimes_{\op{W}(\F)} K\] with $\op{G}_{\Q_p}$-action. According to \cite[Proposition 3]{bre99}, the functor $\hat{\text{T}}^*_{\rm{cris}}$ induces an  anti-equivalence between the category of \textit{strongly divisible lattices} in $\op{D}_{\rm{cris}}(V^*)$ and that of
$\op{G}_{\Q_p}$-stable lattices in $V$. The reader may also refer to \cite[Theorem 2.11]{hattori2018integral}. We only need to note that there is a $\varphi$-equivariant isomorphism of $K$-vector spaces\[\text{D}_{\rm{cris}}(V^*)\simeq \mathcal{M}\otimes_{\op{W}(\F)} K.\] Thus to complete the proof of the result, it suffices to show that there is an $\op{G}_{\Q_p}$-equivariant isomorphism of $K$-vector spaces 
\begin{equation}
V_f\simeq V^*.
\end{equation}
Note that $V_f=L\otimes_{\op{W}(\F)}K$ and $V^*=\Hom_{\op{W}(\F)}(\hat{\text{T}}^*_{\rm{cris}}(\mathcal{M}),\op{W}(\F)(1))\otimes_{\op{W}(\F)}K$.
Thus, it suffices to show that there is a $\text{G}_{\Q_p}$-equivariant isomorphism of $K$-vector spaces \begin{equation}\label{technicaliso}L\otimes_{\op{W}(\F)}K\simeq \Hom_{\op{W}(\F)}(\hat{\text{T}}^*_{\rm{cris}}(\mathcal{M}),\op{W}(\F)(1))\otimes_{\op{W}(\F)}K.\end{equation}
\par Following \cite{hattori2018integral} $\text{U}_S\simeq \text{T}_{\rm{cris}}^*$ where $\text{T}_{\rm{cris}}^*$ is prescribed by 
\[\text{T}_{\rm{cris}}^*(M)=\Hom_{\text{MF}}(M,\varinjlim_{n}\text{A}_{\rm{cris}}/p^n),\] and thus,
\begin{equation}\label{T1equality}\begin{split}\text{T}_1(M)=&\Hom_{\op{W}(\F)}(\text{T}_{\rm{cris}}^*(M),K/\op{W}(\F)(1))\\
=& \Hom_{\op{W}(\F)}(\Hom_{\text{MF}}(M,\varinjlim_{n}\text{A}_{\rm{cris}}/p^n),K/\op{W}(\F)(1)).\end{split}\end{equation}We now proceed to prove $\eqref{technicaliso}$ by simplifying both sides of the isomorphism. We observe that
\[\begin{split}
&\Hom_{\op{W}(\F)}(\hat{\text{T}}^*_{\rm{cris}}(\mathcal{M}),\op{W}(\F)(1))\\
=&\Hom_{\op{W}(\F)}(\Hom_{MF}(\mathcal{M},\text{A}_{\rm{cris}}),\op{W}(\F)(1))\\
\simeq &\Hom_{\op{W}(\F)}(\Hom_{MF}(\mathcal{M},\text{A}_{\rm{cris}})\otimes_{\op{W}(\F)}K/\op{W}(\F),K/\op{W}(\F)(1))\\
\simeq &\Hom_{\op{W}(\F)}(\varinjlim_{n}\Hom_{MF}(\mathcal{M},\text{A}_{\rm{cris}})/p^n,K/\op{W}(\F)(1)).\\
\end{split}
\]
On the other hand, 
\[\begin{split}
L\simeq & \varprojlim_m\text{T}_1(\mathcal{M}/p^m )\\
\simeq & \varprojlim_m\Hom_{\op{W}(\F)}(\Hom_{MF}(\mathcal{M}/p^m,\varinjlim_{n}\text{A}_{\rm{cris}}/p^n),K/\op{W}(\F)(1))\\
\simeq & \Hom_{\op{W}(\F)}(\varinjlim_m\Hom_{MF}(\mathcal{M}/p^m,\varinjlim_{n}\text{A}_{\rm{cris}}/p^n),K/\op{W}(\F)(1))\\
\simeq & \Hom_{\op{W}(\F)}(\varinjlim_m\Hom_{MF}(\mathcal{M},\text{A}_{\rm{cris}}/p^m),K/\op{W}(\F)(1)).
\end{split}\]
The $\text{G}_{\Q_p}$-equivariant inclusion
\[\varinjlim_{n}\Hom_{MF}(\mathcal{M},\text{A}_{\rm{cris}})/p^n
\hookrightarrow \varinjlim_n\Hom_{MF}(\mathcal{M},\text{A}_{\rm{cris}}/p^n)
\] 
induces a $\text{G}_{\Q_p}$-equivariant surjection \[L\otimes_{\op{W}(\F)}K\twoheadrightarrow \Hom_{\op{W}(\F)}(\hat{\text{T}}^*_{\rm{cris}}(\mathcal{M}),\op{W}(\F)(1))\otimes_{\op{W}(\F)}K\simeq V^*.\]
On the other hand, we observe that \[\dim_K L\otimes_{\op{W}(\F)}K=\text{rank}_{\op{W}(\F)} L=2 \]
and \[\dim_K V^*\geq \dim_K \text{D}_{\rm{cris}}(V^*)=\dim_K \mathcal{M}\otimes_{\op{W}(\F)}K=2,\] and thus this surjection must be an isomorphism.
\end{proof}
\begin{Def}
Recall that $\mathcal{C}_p$ is the flat deformation functor (with fixed determinant). For $R\in \mathcal{C}_{\op{W}(\F)}^f$ with maximal ideal $\mathfrak{m}_R$ and $\lambda\in \Z_{\geq 1}$ we define \[\Fl(R)\subseteq (\mathcal{C}_p\times \hat{\mathbb{A}}^1)(R)= \mathcal{C}_p(R)\times \mathfrak{m}_R \] the set of pairs $(\rho,U)$ such that \[\t_{\restriction \rho}=p^{\lambda}(1+U).\] That $\mathcal{F}^{\lambda}$ is a functor follows from $\eqref{relation}$.
\end{Def}
Note that the functor $\mathcal{F}^{\lambda}$ does not account for all pairs $(\rho,V)\in \mathcal{C}_p(R)\times \mathfrak{m}_R$ such that $V=\t_{\restriction \rho}$ is $p^{\lambda}$ times a unit in $R$. For the purpose of proving the lifting theorem, it suffices to consider only traces which are expressible as $p^{\lambda}$ times a unit which is an element of $(1+\mathfrak{m}_R)$. This choice is made for convenience. The second component of $(\mathcal{C}_p\times \hat{\mathbb{A}}^1)(R)$ parametrizes principal units in $R$. The flat deformation ring at $p$ is a $2$-variable power series ring $\op{W}(\F)[[X_1,X_2]]$ (see \cite[Theorem 3.1]{Ram1}). Recall that the functor $\mathcal{C}_p$ parametrizes flat deformations with determinant $\chi_{\restriction\op{G}_{\Q_p}}$. The dimension of the tangent space $\mathcal{N}_p$ is equal to $1$ (see \cite[Table 1 p. 125]{RaviFM}) and $\mathcal{C}_p$ is pro-represented by a power-series ring in a single variable $\mathcal{R}:=\op{W}(\F)[[X]]$. The functor $\mathcal{C}_p\times\hat{\mathbb{A}}^1$ is pro-represented by the fibered product \[\text{Spec}\mathcal{R}\times_{\text{Spec}\op{W}(\F)}\hat{\mathbb{A}}^1\simeq \text{Spec}\op{W}(\F)[[X,Y]].\]
Let $\mathcal{I}$ be an ideal in $\mathcal{R}=\op{W}(\F)[[X]]$ such that $\mathcal{R}\slash \mathcal{I}$ is Artinian. Let $\rho_{\mathcal{R}}$ be the universal representation representing $\mathcal{C}_p$ and let $\rho_{\mathcal{R}\slash \mathcal{I}}$ be the reduction of $\rho_{\mathcal{R}}$ mod $\mathcal{I}$. Let $\mathcal{I}'$ be an ideal which contains $\mathcal{I}$, then by $\eqref{relation}$, we have that
\[\t_{|\rho_{\mathcal{R}\slash \mathcal{I}'}}=\t_{|\rho_{\mathcal{R}\slash \mathcal{I}}}\mod \mathcal{I'}.\]
Let $\Phi(X)\in \mathcal{R}=\op{W}(\F)[[X]]$ be the limit over ideals $\mathcal{I}$ for which $\mathcal{R}\slash \mathcal{I}$ is Artinian
\[\Phi(X):=\varprojlim_{\mathcal{I}} \t_{|\rho_{\mathcal{R}\slash \mathcal{I}}}.\]
\par Letting \[\tilde{\mathcal{R}}^{\lambda}:=\frac{\op{W}(\F)[[X,Y]]}{(\Phi(X)-p^{\lambda}(1+Y))},\] we observe that the equality $\Phi(X)=p^{\lambda}(1+Y)$ corresponds to the condition $\t_{\restriction \rho}=p^{\lambda}(1+U)$. The functor $\mathcal{F}^{\lambda}$ is pro-represented by 
\[X_{\mathcal{F}^{\lambda}}=\text{Spec}\tilde{\mathcal{R}}^{\lambda}=\text{Spec}\left(\frac{\op{W}(\F)[[X,Y]]}{(\Phi(X)-p^{\lambda}(1+Y))}\right).\] Let $\mathfrak{m}_{\mathcal{R}}$ be the maximal ideal of $\mathcal{R}=\op{W}(\F)[[X]]$, we observe that \[\t_{|\bar{\rho}}=\Phi(X)\:\text{mod}\: \mathfrak{m}_{\mathcal{R}}.\] It follows from the theorem of Khare and Wintenberger that $\bar{\rho}=\bar{\rho}_{g,\mathfrak{q}}$ where $g$ is a cuspidal Hecke eigenform and $\mathfrak{q}|p$ is a prime in the field of Fourier coefficients $\Q(g)$. Since $\bar{\rho}_{\restriction \op{G}_{\Q_p}}$ is irreducible, $g$ is supersingular at $\mathfrak{q}$, i.e., $p|\iota_{\mathfrak{q}}(a_p(g))$. It follows from Proposition $\ref{comparisonprop}$ the trace of the residual representation $\t_{|\bar{\rho}}=0$ and as a consequence $\Phi(X)\in \mathfrak{m}_{\mathcal{R}}$. The projection to the second factor is a natural transformation $\pi_2:\mathcal{C}_p\times\hat{\mathbb{A}}^1\rightarrow \hat{\mathbb{A}}^1$ and restricts to a natural transformation $\pi_2: \mathcal{F}^{\lambda}\rightarrow \hat{\mathbb{A}}^1$ which induces the map of $\op{W}(\F)$-algebras
\[\pi_2^*:\op{W}(\F)[[Y]]\rightarrow \tilde{\mathcal{R}}^{\lambda}=\frac{\op{W}(\F)[[X,Y]]}{(\Phi(X)-p^{\lambda}(1+Y))}\] for which $\pi_2^*(Y)=Y\:\text{mod}(\Phi(X)-p^{\lambda}(1+Y))$.
\begin{Lemma}\label{lemma54}
The above map $\pi_2^*$ is an isomorphism if the coefficient of $X$ in the power series expansion of $\Phi(X)$ is a unit in $\op{W}(\F)$.
\end{Lemma}
\begin{proof}
     As explained in the above paragraph, it follows from Proposition $\ref{comparisonprop}$ that $\Phi(X)\in \mathfrak{m}_{\mathcal{R}}$ and therefore the constant coefficient is zero. Note that it follows from the algebraic independence of $X$ and $Y$ in the power series ring $\op{W}(\F)[[X,Y]]$ that $\pi_2^*$ is indeed injective. In order to prove surjectivity, it suffices to show that $X$ is in the image of $\pi_2^*$. In other words, we express $X$ as an element of $\op{W}[[Y]]$, after going modulo the ideal generated by $\Phi(X)-p^{\lambda}(1+Y)$. Write $\Phi(X)$ as a product $cX(1+Xv(X))$, where $c\in\op{W}(\F)$ is a unit and $v(X)\in \op{W}(\F)[[X]]$. Therefore, the relation 
\[\Phi(X)=p^{\lambda}(1+Y)\] may be rephrased as 
\begin{equation}\label{22aug}X=c^{-1}p^{\lambda}(1+Y)\sum_{i=0}^{\infty}(-1)^i X^i v(X)^i.\end{equation}The above relation \eqref{22aug} implies that 
\[X\equiv c^{-1}p^{\lambda}\mod{\mathfrak{m}_{\mathcal{R}}^2}.\] By induction, we show that for all $n\geq 1$, there is a polynomial $f_n(Y)\in \op{W}(\F)[Y]$ such that 
\[X\equiv f_n(Y)\mod{\mathfrak{m}_{\mathcal{R}}^n}.\] Assume this is true for $n$, we show that it is true for $n+1$ as well. Clearly, 
\[X\equiv f_n(Y)+g(X,Y)\mod{\mathfrak{m}_{\mathcal{R}}^{n+1}},\]where \[g(X,Y)=\sum_{0\leq i,j\leq n} a_{i,j} p^i X^j Y^{n+1-i-j}.\] Here, $a_{i,j}=0$ or is a unit in $\op{W}(\F)$. At this stage, plug in the relation \eqref{22aug} and expand all terms of
\[f_n(Y)+g\left(c^{-1}p^{\lambda}(1+Y)\sum_{i=0}^{\infty}(-1)^i X^i v(X)^i, Y\right)\] up to modulo ${\mathfrak{m}_{\mathcal{R}}^{n+1}}$. It is easy to see that going mod-${\mathfrak{m}_{\mathcal{R}}^{n+1}}$, the above expression is represented by a polynomial $f_{n+1}(Y)$ in $Y$ alone. This completes the inductive argument which shows that $X$ is in the image of $\pi^*$. Hence, the map $\pi^*$ is surjective as well, and is therefore an isomorphism.
\end{proof}
\begin{Prop}\label{pi2}
The map $\pi_2^*$ is an isomorphism of $\op{W}(\F)$-algebras and thus $\mathcal{F}^{\lambda}$ is representable by a power series ring in one variable over $\op{W}(\F)$.
\end{Prop}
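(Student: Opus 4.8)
## Proof proposal

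The plan is to exhibit an explicit two-sided inverse to $\pi_2^*$. The key structural input is that $\Phi(X) \in \mathfrak{m}_{\mathcal{R}}$, which was established just above via Proposition \ref{comparisonprop} and the Khare--Wintenberger theorem: the residual trace $\t_{|\bar{\rho}}$ vanishes because $\bar{\rho}_{\restriction \text{G}_{\Q_p}}$ is irreducible, hence supersingular. So I would write $\Phi(X) = c\,X + (\text{higher order terms}) + (\text{terms in } p)$, i.e. $\Phi(X) \in (p, X)$. The first thing to check is that the linear coefficient in $X$ (the image of $\Phi$ in $\mathfrak{m}_{\mathcal{R}}/\mathfrak{m}_{\mathcal{R}}^2$, or rather its $X$-component after killing $p$) is a unit; equivalently that $\Phi(X) \bmod p$ has $X$-adic valuation exactly one in $\F[|X|]$. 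Granting this, $\Phi(X)$ together with $p$ generates $\mathfrak{m}_{\mathcal{R}}$, and one can solve the equation $\Phi(X) = p^{\lambda}(1+Y)$ for $X$ as a power series in $Y$.

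More precisely, I would argue as follows. In $\tilde{\mathcal{R}}^{\lambda} = W(\F)[|X,Y|]/(\Phi(X) - p^{\lambda}(1+Y))$, the relation forces $\Phi(X) = p^{\lambda}(1+Y)$, so $\Phi(X)$ is divisible by $p^{\lambda}$ and in particular lies in $(p)$. Since $\Phi(X) \bmod p$ is (by the valuation-one claim) of the form $X \cdot u(X)$ for a unit $u(X) \in \F[|X|]$, Hensel/Weierstrass-type reasoning lets me conclude that in $\tilde{\mathcal{R}}^{\lambda}$ the element $X$ can be expressed as a power series in $Y$ and $p$: indeed, reducing mod $p$ kills $p^{\lambda}(1+Y)$ and gives $X\cdot u(X) = 0$ in $\F[|X,Y|]$, forcing $X \equiv 0 \bmod p$ in $\tilde{\mathcal{R}}^{\lambda}$; then one bootstraps mod $p^2, p^3, \dots$ using the defining relation to solve for $X$ successively, obtaining a unique $\Psi(Y) \in \mathfrak{m}_{W(\F)[|Y|]}$ with $X = \Psi(Y)$ in $\tilde{\mathcal{R}}^{\lambda}$ and $\Phi(\Psi(Y)) = p^{\lambda}(1+Y)$ identically. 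This yields a well-defined $W(\F)$-algebra map $W(\F)[|X,Y|] \to W(\F)[|Y|]$ sending $X \mapsto \Psi(Y)$, $Y \mapsto Y$, which kills $\Phi(X) - p^{\lambda}(1+Y)$ and therefore factors through $\tilde{\mathcal{R}}^{\lambda}$, giving a map $\beta: \tilde{\mathcal{R}}^{\lambda} \to W(\F)[|Y|]$. One checks $\beta \circ \pi_2^* = \mathrm{id}$ on the generator $Y$, hence everywhere, and $\pi_2^* \circ \beta$ sends $X \mapsto \Psi(Y) = X$ and $Y \mapsto Y$ in $\tilde{\mathcal{R}}^{\lambda}$, hence is the identity. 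Thus $\pi_2^*$ is an isomorphism and $\tilde{\mathcal{R}}^{\lambda} \cong W(\F)[|Y|]$.

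I expect the genuine obstacle to be the valuation-one claim: showing that $\Phi(X) \bmod p$, equivalently the universal trace of Frobenius on the Fontaine--Laffaille module of the universal flat deformation, has $X$-adic order \emph{exactly} one rather than $\geq 2$. Vanishing mod $\mathfrak{m}_{\mathcal{R}}$ is already in hand; the content is non-vanishing mod $\mathfrak{m}_{\mathcal{R}}^2$. This should follow from an explicit computation on the versal deformation of $M_0$ over $W(\F)[X]/(X^2, p^N)$ for suitable $N$: the tangent space of $\mathcal{F}$ is one-dimensional (as $\mathcal{R} = W(\F)[|X|]$), and on that one-dimensional family the first-order variation of $\mathrm{tr}\,\varphi$ must be nonzero, for otherwise the trace would be constant $\equiv 0$ along the whole flat deformation space, contradicting (via Proposition \ref{comparisonprop} and the existence, from Khare--Wintenberger applied to suitable twists or congruences, of modular flat lifts with $v_p(a_p) = 1$) the fact that slope-one flat deformations exist. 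In fact a clean way to package this: if $\Phi(X) \in \mathfrak{m}_{\mathcal{R}}^2$ then $\Phi(X) \in (p^2, pX, X^2)$, and for \emph{any} flat deformation to $W(\F)/p^2$ one would get $\t \equiv 0 \bmod p^2$, so no modular form with $v_p(\iota_{\mathfrak{p}}(a_p)) = 1$ could ever lift $\bar\rho$ after reduction mod $p^2$ — but such forms exist, giving the contradiction. Once the valuation-one fact is secured, the rest is the formal power-series manipulation sketched above, which I would present compactly rather than spelling out every successive approximation.
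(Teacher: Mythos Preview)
Your reduction is correct and matches the paper: the whole proposition comes down to showing that the linear coefficient of $\Phi(X)$ is a unit in $W(\F)$, after which the formal inversion you sketch goes through. The paper phrases this reduction in essentially the same way.

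The gap is in your argument for the valuation-one claim. Your ``clean'' contradiction argument assumes the existence of a modular flat lift of $\bar\rho$ with $v_p(\iota_{\mathfrak p}(a_p))=1$. But that is exactly the $\lambda=1$ case of Theorem~\ref{main}, which is what this proposition is being used to prove; Khare--Wintenberger only supplies \emph{some} modular lift, with no control on the slope. So the argument is circular. There is also a smaller logical slip: the negation of ``$a_1$ is a unit'' is $a_1\in pW(\F)$, which is \emph{not} the same as $\Phi(X)\in\mathfrak m_{\mathcal R}^2$. If $a_1\in(p)$ but $a_0\notin(p^2)$, then for every flat deformation to $W(\F)/p^2$ the trace is the constant $a_0\bmod p^2$, which need not be $0$; so even the implication ``$a_1$ not a unit $\Rightarrow$ all mod $p^2$ traces vanish'' fails as stated.

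The paper avoids all of this by the direct computation you only gesture at. One works over the dual numbers $\F[\epsilon]$ rather than $W(\F)/p^2$ (this is the right test ring, since it isolates the $X$-direction in $\mathfrak m_{\mathcal R}/\mathfrak m_{\mathcal R}^2$ and kills the $a_0$ ambiguity). Writing the Fontaine--Laffaille module $M_0$ of $\bar\rho_{|G_{\Q_p}}$ in a basis $\{e,k\}$ with $k$ spanning $F^1M_0$, one simply lifts the structure matrix $\left(\begin{smallmatrix}0&c\\ b&d\end{smallmatrix}\right)$ to $\left(\begin{smallmatrix}\tilde a&\tilde c\\ \tilde b&\tilde d\end{smallmatrix}\right)$ over $\F[\epsilon]$ with $\tilde a\neq 0$; this defines an object $\tilde M_0$ of $\text{MF}^{f,[0,2]}_{tor}$ over $\F[\epsilon]$, hence a flat $\F[\epsilon]$-deformation $\varrho=\text T_1(\tilde M_0)$ with $\t_{|\varrho}=\tilde a\neq 0$. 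The associated classifying map $H_\varrho:\mathcal R\to\F[\epsilon]$ then satisfies $H_\varrho(\Phi(X))=\tilde a\neq 0$, forcing the coefficient of $X$ in $\Phi$ to be a unit. That explicit construction is the missing idea; once you replace your contradiction argument with it, the rest of your proposal is fine.
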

\begin{proof}
\par By Lemma $\ref{lemma54}$, it suffices to show that the coefficient of $X$ in the power series expansion of $\Phi(X)$ is a unit. Let $M_0\in \text{MF}_{tor}^{f,[0,2]}$ be such that $\text{T}_1(M_0)\simeq \bar{\rho}_{|\op{G}_{\Q_p}}$. Choose a basis $\{e,k\}$ of $M_0$ such that $k$ spans $F^1 M_0$. The Fontaine-Laffaille module $M_0$ is characterized by a single matrix 
\[\left( {\begin{array}{c|c}
   a & c \\
   b & d \\
  \end{array} } \right)\in \text{M}_2(\F),\]
  where
\[\varphi(e)=ae+bk\text{ and }\varphi^1(k)=ce+dk.   \]The relation $\varphi_{\restriction F^1 M_0}=p\varphi^1=0$ implies that $\varphi(k)=0$. Therefore, the matrices corresponding to $\varphi$ and $\varphi^1$ are as follows \[\varphi=\mtx{a}{0}{b}{0}\text{, and }\varphi^1=\left( {\begin{array}{c}
   c \\
   d \\
  \end{array} } \right).\] The trace $a=\t_{|M_0}=0$, since $\bar{\rho}_{|\op{G}_{\Q_p}}$ is supersingular. Let $\tilde{M}_0$ be a Fontaine-Laffaille module of dimension $4$ over $\F$ provided with the structure of a free $\F[\epsilon]$-module which we proceed to describe. We let
$\tilde{M}_0=\F[\epsilon] \cdot\tilde{e}\oplus \F[\epsilon] \cdot\tilde{k}$ with filtration and $\varphi^i$ defined as follows:
\begin{itemize}
\item $F^0\tilde{M}_0:=\tilde{M}_0$, $F^1\tilde{M}_0:=\F[\epsilon]\cdot \tilde{k}$ and $F^2\tilde{M}_0:=0$,
\item the maps $\varphi^j$ for $j=0,1$ are $\F[\epsilon]$-module maps, which similar to $M_0$ is characterized by a matrix
$\left( {\begin{array}{c|c}
   \tilde{a} & \tilde{c} \\
   \tilde{b} & \tilde{d} \\
  \end{array} } \right)\in \text{M}_2(\F[\epsilon])$
  where
\[\varphi(\tilde{e})=\tilde{a}\cdot \tilde{e}+\tilde{b}\cdot \tilde{k} \text{ and }\varphi^1(\tilde{k})=\tilde{c}\cdot \tilde{e}+\tilde{d}\cdot \tilde{k}.   \]
\end{itemize}
We further assume that $\tilde{r}$ lifts $r$ for $r\in \{a,b,c,d\}$, then the map $\mathcal{Q}:\tilde{M}_0\rightarrow M_0$ mapping $\tilde{e}\mapsto e$ and $\tilde{k}\mapsto k$ is a map of Fontaine-Laffaille modules since $\varphi^j\circ\mathcal{Q}=\mathcal{Q}\circ \varphi^j$ for $j=0,1$.
\par It follows from previous discussions that $\text{T}_1(\tilde{M}_0)$ is a rank $2$ Galois representation $\varrho$ which lifts $\bar{\rho}_{|\op{G}_{\Q_p}}$
\[ \begin{tikzpicture}[node distance = 2.5 cm, auto]
            \node at (0,0) (G) {$\text{G}_{\Q_p}$};
             \node (A) at (3,0){$\text{GL}_2(\F)$.};
             \node (B) at (3,2){$\text{GL}_2(\F[\epsilon])$};
      \draw[->] (G) to node [swap]{$\bar{\rho}_{\restriction \text{G}_{p}}$} (A);
       \draw[->] (B) to node{} (A);
      \draw[->] (G) to node {$\varrho$} (B);
      \end{tikzpicture}\]
      We observe that $\t_{|\varrho}=\tilde{a}\in \F[\epsilon]$ lifts $a=0$. We simply specify $\tilde{M}_0$ by choosing the matrix $\left( {\begin{array}{c|c}
   \tilde{a} & \tilde{c} \\
   \tilde{b} & \tilde{d} \\
  \end{array} } \right)$ such that $\tilde{a}\neq 0$, clearly such a choice can be made. The Galois representation $\varrho$ coincides with a $\op{W}(\F)$-algebra homomorphism $H_{\varrho}:\mathcal{R}\rightarrow \F[\epsilon]$ such that 
  \begin{equation}\label{Hvarrho}H_{\varrho}(\Phi(X))=\tilde{a}\neq 0.\end{equation} Express $\Phi(X)=\sum_{i=1}^{\infty} b_i X^i$, it follows from $\eqref{Hvarrho}$ that $b_1$ is a unit. It follows from Lemma $\ref{22aug}$ that $\pi_2^*$ is an isomorphism.
\end{proof}
\begin{Def}
Let $\mathcal{C}_p^{\lambda}$ be the subfunctor of $\mathcal{C}_p$ prescribed by \[\mathcal{C}_p^{\lambda}:=\op{image} \{\mathcal{F}^{\lambda}\rightarrow \mathcal{C}_p\}\] and thus 
\[\mathcal{C}_p^{\lambda}(R):=\{\rho\in \mathcal{C}_p(R)\mid \text{ there exists }U\in \mathfrak{m}_R\text{ such that }\t_{\restriction \rho}=p^{\lambda}(1+U)\}. \]
\end{Def}
     \begin{Prop}
     \label{liftableprop}
     The deformation functor $\mathcal{C}_p^{\lambda}$ is a liftable.
     \end{Prop}
     \begin{proof}
      By Proposition $\ref{pi2}$ it follows that the functor $\mathcal{F}^{\lambda}\simeq \hat{\mathbb{A}}^1$ and thus, $\mathcal{C}_p^{\lambda}$ is a liftable deformation functor.
     \end{proof}
     Recall that $\mathcal{N}_p$ is the tangent space of the flat deformation functor $\mathcal{C}_p$ and that the dimension of the tangent space $\mathcal{N}_p$ is equal to $1$ (see \cite[Table 1 p. 125]{RaviFM}). The following proposition shows that $\mathcal{N}_p$ acts as a tangent space to the subfunctor $\mathcal{C}_p^{\lambda}$ for mod-$p^m$ lifts when $m$ is suitably large. The proposition makes this statement precise.
    \begin{Prop}\label{localatp}
    For $\lambda\in\Z_{\geq 1}$, the space $\mathcal{N}_p$ stabilizes mod $p^m$ lifts of $\bar{\rho}_{\restriction \op{G}_{\Q_p}}$ in $\mathcal{C}_p^{\lambda}$ for $m\geq  \lambda+2$. In other words, for $m\geq  \lambda+2$, $X\in \mathcal{N}_p$ and $\varrho_m\in \mathcal{C}_p^{\lambda}(\op{W}(\F)/p^m)$, the twist
    \[(\op{Id}+p^{m-1}X)\varrho_m\in \mathcal{C}_p^{\lambda}(\op{W}(\F)/p^m).\]
    \end{Prop}
    \begin{proof}
    \par Let $\varrho_{\lambda+1}:\op{G}_{\Q_p}\rightarrow \op{GL}_2(\op{W}(\F)/p^{\lambda+1})$ be the mod-$p^{\lambda+1}$ reduction of $\varrho_m$. Set $\varrho_m'$ to denote the twist $(\op{Id}+p^{m-1} X)\varrho_m$, and $\varrho_{\lambda+1}'$ the mod-$p^{\lambda+1}$ reduction of $\varrho_m'$. Note that $\varrho_m'$ satisfies $\mathcal{C}_p$. Clearly, we have that  $\varrho_{\lambda+1}=\varrho_{\lambda+1}'$. Therefore, $\varrho_{\lambda+1}'$ satisfies $\mathcal{C}_p^{\lambda}$. We show that this is enough to conclude that $\varrho_m'$ satisfies $\mathcal{C}_p^{\lambda}$. Since $\varrho_{\lambda+1}'$ satisfies $\mathcal{C}_p^{\lambda}$, we have that
    \[\t_{\restriction \varrho_{\lambda+1}'}=p^{\lambda}(1+V), \]where $V$ is divisible by $p$. Since $\rho_{\lambda+1}'$ is a mod-$p^{\lambda+1}$ representation, $p^{\lambda} V=0$ and thus,\[\t_{\restriction \varrho_{\lambda+1}'}=p^{\lambda}.\] We deduce that \[\t_{\restriction \varrho_{m}'}=p^{\lambda}\mod{p^{\lambda+1}}\]and hence,  \[\t_{\restriction \varrho_{m}'}=p^{\lambda}(1+U), \]where $U$ is divisible by $p$. This completes the proof.
    \end{proof}
    \section{Lifting to mod $p^{\lambda+2}$ and the proof of the main Theorem}\label{lastsection}
    \par Fix $\lambda\in \Z_{\geq 1}$. In this section, we show that there is a finite set of nice primes $Z$ disjoint from $S$ and a deformation $\rho_{\lambda+2}$
    \[ \begin{tikzpicture}[node distance = 2.5 cm, auto]
            \node at (0,0) (G) {$\op{G}_{\Q,S\cup Z}$};
             \node (A) at (3,0){$\op{GL}_2(\F)$,};
             \node (B) at (3,2){$\op{GL}_2(\op{W}(\F)/p^{\lambda+2})$};
      \draw[->] (G) to node [swap]{$\bar{\rho}$} (A);
       \draw[->] (B) to node{} (A);
      \draw[->] (G) to node {$\rho_{\lambda+2}$} (B);
      \end{tikzpicture}\]
      such that $\rho_{\lambda+2}$ satisfies the conditions $\mathcal{C}_v$ for $v\in (S\backslash\{p\})\cup Z$ and $\mathcal{C}_p^{\lambda}$ at $p$. From here on in, we use the following simplifying notation.
      \begin{Def}\label{stardef}
      We say that a deformation of $\rho_m$ satisfies condition $(\star)$ if satisfies the conditions $\mathcal{C}_v$ at all primes $v\neq p$ at which $\rho_m$ is allowed to ramify and $\mathcal{C}_p^{\lambda}$ at $p$.
      \end{Def}Thus, we wish to show that a lift $\rho_{\lambda+2}$ exists which satisfies $(\star)$. Referring to section $\eqref{sectionoverview}$, this is a crucial ingredient in the proof of Theorem $\ref{main}$. Once we exhibit $\rho_{\lambda+2}$, the arguments in section $\eqref{sectionoverview}$ show that $\rho_{\lambda+2}$ can be lifted to characteristic zero. We shall then be primed to prove Theorem $\ref{main}$.
      \par 
    According to duality of $\Sha$-groups, the group $\Sha^2_S(\text{Ad}^0\bar{\rho})$ is dual to $\Sha^1_S(\text{Ad}^0\bar{\rho}^*)$. Standard arguments show that there is a finite set of primes $V$ disjoint from $S$ such that 
    \begin{itemize}
        \item $V$ consists of nice primes,
        \item $\Sha^1_{S\cup V}(\text{Ad}^0\bar{\rho})$ and $\Sha^2_{S\cup V}(\text{Ad}^0\bar{\rho})$ are zero.
    \end{itemize}
    The reader may refer to \cite[Lemma 6]{KLR}.
    \par We lift $\bar{\rho}$ to $\rho_{\lambda+2}$ via an inductive lifting argument. Let $m \leq \lambda+1$. Assume that there exists a finite set of nice primes $X_m\supseteq V$ disjoint from $S$ and a mod-$p^m$ deformation $\rho_m:\op{G}_{\Q,S\cup X_m}\rightarrow \op{GL}_2(\op{W}(\F)/p^m)$ satisfying $(\star)$. To clarify, this means that $\rho_m$ satisfies conditions $\mathcal{C}_v$ at $v\in (S\backslash \{p\})\cup X_m$ and $\mathcal{C}_p^{\lambda}$ at $p$. Note that since $X_m$ contains $V$, we have that $\Sha^2_{S\cup X_m}(\g)=0$. It follows that $\rho_m$ lifts to a Galois representation $\rho_{m+1}:\op{G}_{\Q, S\cup X_{m}}\rightarrow \op{GL}_2(\op{W}(\F)/p^{m+1})$. This is because the obstruction-class $\mathcal{O}(\rho_m)$ is zero, the argument is identical to that in $\eqref{sectionoverview}$. We seek to find a finite set of nice primes $X_{m+1}$ containing $X_m$ and a cohomology class $z\in H^1(\op{G}_{\Q,S\cup X_{m+1}}, \g)$ such that the twist $\rho_{m+1}=(\op{Id}+p^{m-1} z)\rho_{m+1}'$ satisfies $(\star)$. We then replace $\rho_{m+1}'$ by $\rho_{m+1}$. This shall complete the inductive lifting argument, and we shall deduce that there is a deformation $\rho_{\lambda+2}:\op{G}_{\Q, S\cup X_{\lambda+2}} \rightarrow \op{GL}_2(\op{W}(\F)/p^{\lambda+2})$ satisfying $(\star)$. We shall set $Z:=X_{\lambda+2}$.
    
      \begin{Lemma}\label{closetotheend}
      Fix $\lambda\in \Z_{\geq 1}$ and let $m\leq \lambda+1$. Assume that there a finite set of nice primes $X_m$ containing $V$ and a deformation $\rho_m:\op{G}_{\Q,S\cup X_m}\rightarrow \op{GL}_2(\op{W}(\F)/p^m)$ satisfying $(\star)$. There exists a finite choice of nice primes $X_{m+1}$ containing $X_m$ and a deformation $\rho_{m+1}:\op{G}_{\Q,S\cup X_{m+1}}\rightarrow \op{GL}_2(\op{W}(\F)/p^{m+1})$ of $\rho_m$ which satisfies $(\star)$.
      \end{Lemma}
      \begin{proof}
    The argument is similar to that of \cite[pp. 725-726]{KLR}.
        Since $X_m$ contains $V$, we have that $\Sha^2_{S\cup X_m} (\g)$ is zero. Thus, since $\rho_{m}$ satisfies a liftable deformation functor at each prime at which it is allowed to ramify, there is a global lift $\rho_{m+1}:\op{G}_{\Q,S\cup X_{m}}\rightarrow \op{GL}_2(\op{W}(\F)/p^{m+1})$. This is because the obstruction-class $\mathcal{O}(\rho_m)$ is zero, the argument is identical to that in $\eqref{sectionoverview}$.
        According to \cite[Lemma 8]{KLR} there is a finite set of nice primes $X_{m+1}'$ containing $X_m$ such that 
    \begin{enumerate}
        \item $\rho_m$ satisfies $\mathcal{C}_v$ at each prime $v\in X_{m+1}'/X_m$.
        \item The restriction map
        \begin{equation}\label{isolast}H^1(\op{G}_{\Q,S\cup X_{m+1}'}, \g)\rightarrow \bigoplus_{v\in S\cup X_m} H^1(\op{G}_{\Q_v}, \g)
          \end{equation}
          is an isomorphism.
    \end{enumerate}
        \par Let $v$ be a prime in $S\cup X_m$. There is a local cohomology class 
        \[z_v\in H^1(\op{G}_{\Q_v}, \g)\]
        such that the twist $(\op{Id}+p^{m} z_v)\rho_{m+1}$ satisfies $\mathcal{C}_v$ for $v\neq p$ and $\mathcal{C}_p^{\lambda}$ for $v=p$. Since the map $\eqref{isolast}$ is an isomorphism, there exists $z\in H^1(\op{G}_{\Q, S\cup X_{m+1}'}, \g)$ such that $z_{\restriction \op{G}_{\Q_v}}=z_v$ for each prime $v\in S\cup X_{m}$. Thus, by construction, the twist $\rho_{m+1}':=(\op{Id}+p^m z) \rho_{m+1}$ satisfies $\mathcal{C}_v$ at each prime $v\in (S\backslash \{p\})\cup X_m$ and $\mathcal{C}_p^{\lambda}$ at $p$. We are not done, since the twist is not known to satisfy the conditions $\mathcal{C}_v$ for $v\in X_{m+1}'\backslash X_m$. At each prime $v\in X_{m+1}'\backslash X_m$, choose a cohomology class $h_v\in H^1(\op{G}_{\Q_v},\g)$ such that $(\op{Id}+p^m h_v)\rho_{m+1| \op{G}_{\Q_v}}'$ satisfies $\mathcal{C}_v$. Note that $h_v$ is determined up to $\mathcal{N}_v$, i.e. if $g_v \in \mathcal{N}_v$ then $(\op{Id}+p^m (h_v+g_v))\rho_{m+1| \op{G}_{\Q_v}}'$ satisfies $\mathcal{C}_v$. According to Lemma $\ref{Nvnice}$, the space $\mathcal{N}_v$ consists of cohomology classes $h$ such that $h(\sigma_v)=0$. According to \cite[Corollary 11]{KLR} there is a finite set of nice primes $X_{m+1}$ containing $X_{m+1}'$ such that $\rho_{m+1}'$ satisfies $\mathcal{C}_v$ at each prime $v\in X_{m+1}\backslash X_{m+1}'$ and there is a cohomology class $g\in H^1(\op{G}_{\Q, S\cup X_{m+1}},\g)$ such that
        \begin{enumerate}
            \item $g_{\restriction \op{G}_{\Q_v}}=0$ for $v\in X_m$, 
            \item $g(\sigma_v)=h_v(\sigma_v)$ for $v\in X_{m+1}'\backslash X_m$,
            \item $g(\sigma_v)=0$ for $v\in X_{m+1}\backslash X_{m+1}'$.
        \end{enumerate}
        In other words, $g_{\restriction \op{G}_{\Q_v}}\in h_v+\mathcal{N}_v$ for $v\in X_{m+1}'\backslash X_m$ and $g_{\restriction \op{G}_{\Q_v}}\in \mathcal{N}_v$ for $v\in X_{m+1}\backslash X_{m+1}'$. Recall that $\rho_{m+1}'$ satisfies $\mathcal{C}_v$ for $v\in X_{m+1}\backslash X_{m+1}'$, thus, $(\op{Id}+p^{m} g)\rho_{m+1}'$ satisfies $\mathcal{C}_v$ at $v$ as well. The twist $(\op{Id}+p^{m} g)\rho_{m+1}'$ satisfies $(\star)$. This concludes the proof.
      \end{proof}
     
      \begin{Cor}\label{liftrholambda}
      There is a finite set of nice primes $Z$ containing $V$ such that there is a deformation \[\rho_{\lambda+2}:\op{G}_{\Q, S\cup Z}\rightarrow \op{GL}_2(\op{W}(\F)/p^{\lambda+2})\] of $\bar{\rho}$ satisfying $(\star)$.
      \end{Cor}
      \begin{proof}
           The Corollary is an immediate consequence of Lemma $\ref{closetotheend}$.
      \end{proof}
      \begin{proof}(of Theorem \ref{main})
      Corollary $\ref{liftrholambda}$ asserts that there is a finite set of nice primes $Z$ containing $V$ such that there is a deformation \[\rho_{\lambda+2}:\op{G}_{\Q, S\cup Z}\rightarrow \op{GL}_2(\op{W}(\F)/p^{\lambda+2})\] of $\bar{\rho}$ satisfying $(\star)$. There is a finite set of nice primes $X$ containing $Z$ such that the dual Selmer group $H^1_{\mathcal{N}^{\perp}}(\op{G}_{\Q,S\cup X}, \g^*)$ is zero. This is a standard argument, see the proofs of \cite[Lemma 1.2]{taylor} or \cite[Proposition 5.2, Lemma 5.3]{PatEx}. 
     \par We show that $\rho_{\lambda+2}$ may be inductively lifted to a characteristic zero representation
     \[\rho: \op{G}_{\Q,S\cup X}\rightarrow \op{GL}_2(\op{W}(\F))\] satisfying the condition $\mathcal{C}_p^{\lambda}$ at $p$. Moreover, $\rho$ shall satisfy $\mathcal{C}_v$ at the primes $v\in S\backslash \{p\}\cup X$ as well. Here, we recall that for $v\in S\backslash \{p\}$, the deformation functor $\mathcal{C}_v$ is chosen as in Proposition $\ref{prop23}$, and for $v\in X$, it is specified by Definition $\ref{nicedefs}$.
     \par We show by induction that $\rho_{\lambda+2}$ may be lifted to a characteristic zero representation $\rho$.
     Since $H^1_{\mathcal{N}^{\perp}}(\op{G}_{\Q,S\cup X}, \g^*)=0$, by Poitou-Tate, it follows that the restriction map
      \[H^1(\operatorname{G}_{\Q, S\cup X}, \g)\xrightarrow{\operatorname{res}_{S\cup X}} \bigoplus_{v\in S\cup X} \frac{H^1(\op{G}_{\Q_v}, \g)}{\mathcal{N}_v}\]is surjective. Let $m\geq \lambda+2$ and $\rho_m:\op{G}_{\Q,S\cup X}\rightarrow \op{GL}_2(\op{W}(\F)/p^m)$ be a lift of $\rho_{\lambda+2}$ which satisfies the specified local conditions $(\star)$ at the primes $S\cup X$ (see Definition $\ref{stardef}$).
      The inductive argument involves lifting $\rho_m$ one more step to $\rho_{m+1}$, so that the same conditions are satisfied. Recall from section $\ref{section32}$, that if the obstruction class $\mathcal{O}(\rho_m)$ is equal to zero, then $\rho_m$ must lift one more step. Since $\rho_m$ satisfies a liftable deformation condition at each prime $v\in S\cup X$, the obstruction class $\mathcal{O}(\rho_m)$ is trivial when restricted to a prime $v\in S\cup X$. Therefore, it belongs to $\Sha^2_{S\cup X} (\g)$. As explained in section $\ref{section32}$, since the dual Selmer group is zero, so is $\Sha^2_{S\cup X}(\g)$. Hence $\rho_m$ lifts to a representation \[\rho_{m+1}':\op{G}_{\Q,S\cup X}\rightarrow \op{GL}_2(\op{W}(\F)/p^{m+1}).\] It is shown that there is a suitable global cohomology class $z\in H^1(\operatorname{G}_{\Q, S\cup X}, \g)$, such that the twist \[\rho_{m+1}:=(\operatorname{Id}+p^{m}z)\rho_{m+1}'\] satisfies the specified local conditions $(\star)$ at the primes $S\cup X$. At each prime $v\in (S\backslash\{p\})\cup X$, there is a cohomology class $z_v\in H^1(\op{G}_{\Q_v}, \g)$ such that the twist \[(\operatorname{Id}+p^mz_v){\rho_{m+1}}_{\restriction \op{G}_{\Q_v}}\in\mathcal{C}_v(\op{W}(\F)/p^{m+1})\] and a class $z_p\in H^1(\op{G}_{\Q_p}, \g)$ such that \[(\operatorname{Id}+p^mz_p){\rho_{m+1}}_{\restriction \op{G}_{\Q_p}}\in \mathcal{C}_p^{\lambda}(\op{W}(\F)/p^{m+1}).\] Since $m\geq \lambda+2$, it follows from Proposition $\ref{localatp}$ that $\mathcal{N}_p$ stabilizes $\mathcal{C}_p^{\lambda}$. The surjectivity of $\operatorname{res}_{S\cup X}$ implies that the tuple
      \[(z_v)\in \bigoplus_{v\in S\cup X} H^1(\op{G}_{\Q_v}, \g)/\mathcal{N}_v\] arises from a global cohomology class $z$ which is unramified outside $S\cup X$. Therefore, $\rho_{m+1}$ satisfies the conditions $\mathcal{C}_v$ at each prime $v\in (S\backslash\{p\})\cup X$ and $\mathcal{C}_p^{\lambda}$ at $p$. In other words, it satisfies the condition $(\star)$. This completes the inductive lifting argument. Thus, there is a characteristic zero lift $\rho$ unramified away from $S\cup X$, satisfying $\mathcal{C}_p^{\lambda}$ at $p$.

    \par Recall that all deformations are assumed to have determinant $\chi$ and thus, it follows from the main theorem of \cite{KisinFM} that $\rho$ arises from a normalized eigencuspform $f=\sum_{n\geq 1} a_n q^n\in  S_2(\Gamma_1(N))$. Since $\rho$ is crystalline, the level $N$ is prime to $p$. Let $\mathfrak{p}$ be the prime above $p$ in $\Q(f)$ so that $\rho=\rho_{f,\mathfrak{p}}$. Since $\rho$ satisfies $\mathcal{C}_p^{\lambda}$ at $p$, it follows from Proposition $\ref{comparisonprop}$ that 
     \[v_p(\iota_{\mathfrak{p}}(a_p))=\lambda.\] The proof is complete.
      \end{proof}
      
\end{document}